\documentclass[11pt]{amsart}

\usepackage[utf8]{inputenc}
\usepackage[T1]{fontenc}
\usepackage{amsmath,amssymb,amsthm,amsfonts}
\usepackage{mathtools}
\usepackage{enumitem}
\usepackage{hyperref}
\usepackage{geometry}
\usepackage{microtype}

\newtheorem{theorem}{Theorem}[section]
\newtheorem{proposition}[theorem]{Proposition}
\newtheorem{corollary}[theorem]{Corollary}
\newtheorem{lemma}[theorem]{Lemma}
\newtheorem{conjecture}[theorem]{Conjecture}

\theoremstyle{definition}

\newtheorem{example}[theorem]{Example}

\newcommand{\Aut}{\operatorname{Aut}}

\title[Abelian maximal subgroups of tame valued division algebras]
{Abelian maximal subgroups of valued division algebras}
\author{Huynh Viet Khanh}
\address{Department of Mathematics and Informatics, HCMC University of Education,
Ho Chi Minh City, Vietnam}

\email{khanhhv@hcmue.edu.vn}

\subjclass[2020]{16K20, 20E28}

\keywords{division algebras; multiplicative groups; maximal subgroups;
valued division algebras; tame division algebras}

\begin{document}
\begin{abstract}
Let $D$ be a division ring with center $K$. We study when the multiplicative
group $D^*$ can contain an abelian maximal subgroup. We prove that this
cannot occur for any noncommutative tame finite-dimensional central division
algebra over a Henselian valued field with nontrivial valuation. We also apply the same idea to standard valued division rings,
including twisted Laurent series, iterated Laurent series, and
Mal'cev--Neumann series. Finally, we introduce a complementary malnormality
obstruction and use it to rule out abelian maximal subgroups in quaternion
division algebras over real-closed fields.
\end{abstract}

\maketitle

\section{Introduction}

Let $D$ be a division ring with center $F$, and let $D^*$ denote its
multiplicative group.  The subgroup structure of $D^*$ is a basic and
difficult part of the theory of division rings.  Even the existence of maximal subgroups in $D^*$ is a difficult question.  In \cite{HaW1}, Hazrat and Wadsworth showed that if a
finite-dimensional division algebra has no maximal subgroup in its
multiplicative group, then the algebra and its center must satisfy very
restrictive conditions; in particular, the problem is reduced to the existence of
noncyclic division algebras of prime degree.  Their work also exhibits the
importance of valuation theory in this question: for suitable valued cyclic
division algebras they compute quotients of the form
$D^*/K^*(1+M_D)$ and use them to construct nonnormal maximal subgroups of
finite index.

The present paper is concerned with a more special, but still open, question:
can $D^*$ contain an abelian maximal subgroup when $D$ is noncommutative?
This question grew out of a series of papers on maximal subgroups of division
rings and skew linear groups.  Akbari, Mahdavi-Hezavehi, and Mahmudi studied in \cite{AMM} maximal subgroups of $\mathrm{GL}_1(D)=D^*$ and proposed, among other things, that nilpotent maximal subgroups should not occur in the multiplicative group of a
noncommutative division ring.  However, in  \cite{AEKG}, Akbari, Ebrahimian, Momenaei Kermani, and Salehi Golsefidy showed that the corresponding solvable statement is false: $\mathbb C^*\cup \mathbb C^*j$ is a nonabelian solvable maximal subgroup of the real quaternion division ring $\mathbb H^*$. They isolated the abelian case to formulate the following rigidity conjecture.

\begin{conjecture}
\label{conj:abelian-maximal}
Let $D$ be a division ring.  If $D^*$ contains an abelian maximal subgroup,
then $D$ is commutative.
\end{conjecture}

There is substantial evidence for this conjecture.  Ebrahimian proved in \cite{E} that nilpotent maximal subgroups of $D^*$ are abelian, thereby reducing the
nilpotent case to the abelian one.  In \cite{HT}, Hai and Thin proved centrality
results for locally nilpotent subnormal subgroups and studied locally
nilpotent maximal subgroups; related refinements were obtained previously by
Hai given in \cite{Hai}.  Later, Ramezan-Nassab and Kiani extended in \cite{RK1,RK2,RK3} several of these results to subnormal subgroups and to skew linear groups, including results on nilpotent, locally soluble, and polycyclic-by-finite maximal subgroups.  More recently, Khanh and Hai studied solvable-by-finite
and locally solvable maximal subgroups of almost subnormal subgroups of
$\mathrm{GL}_n(D)$ and of $D^*$, showing that the nonabelian locally solvable case is
forced into a cyclic prime-degree situation; see \cite{KH1,KH2}.  These results
all point to the same remaining obstruction: one must understand when the
multiplicative group of a maximal subfield can itself be maximal in $D^*$.

Our main result proves Conjecture~\ref{conj:abelian-maximal} for a large
class of valued division algebras.

\begin{theorem}
\label{thm:intro-main}
Let $K$ be a Henselian valued field with nontrivial valuation, and let $D$ be
a tame finite-dimensional central division algebra over $K$.  If $D$ is
noncommutative, then $D^*$ contains no abelian maximal subgroup.
\end{theorem}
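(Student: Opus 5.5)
The approach is to show that an abelian maximal subgroup of $D^*$ would force $D^*$ to equal $L^*(1+M_D)$ for a maximal subfield $L$, and then to compare the valuation invariants of $D$ and $L$. Here $M_D$ is the maximal ideal of the valuation ring $V_D$ of the unique extension of the valuation of $K$ to $D$. The argument rests on tameness through one fact: a tame algebra over a Henselian field is defectless.

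\emph{Step 1: an abelian maximal subgroup is $L^*$ for a maximal subfield $L$.} Suppose $A$ is an abelian maximal subgroup of $D^*$ and that $[D:K]=n^2$ with $n>1$.
\begin{itemize}
\item Since $K^*$ is central, $K^*A$ is abelian and contains $A$. Being abelian, it is a proper subgroup, because $D$ is noncommutative. By maximality $K^*\subseteq A$.
\item The ring $L=K[A]$ is commutative, and it is finite-dimensional over $K$ inside a division algebra, so it is a field. Then $L^*$ is abelian, contains $A$, and is proper, so $L^*=A$.
\item If $L'\supseteq L$ is any subfield of $D$, the same argument gives $L'^*=A$. Hence $L$ is a maximal subfield, and $[L:K]=n$.
\end{itemize}

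\emph{Step 2: use the normal subgroup $1+M_D$.} The subgroup $1+M_D$ is normal in $D^*$, so $L^*(1+M_D)$ is a subgroup containing $A=L^*$. Thus either $L^*(1+M_D)=L^*$ or $L^*(1+M_D)=D^*$.

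In the first case $1+M_D\subseteq L$, and therefore $M_D\subseteq L$. Because the valuation is nontrivial, there is some $0\neq x\in M_D$. For every $d\in V_D$ we have $dx\in M_D\subseteq L$, so $d\in L$. Since $V_D$ spans $D$ over $K$ (every element of $D$ is a $K$-multiple of an element of $V_D$), this gives $D=L$, contradicting noncommutativity. Hence
\[
D^*=L^*(1+M_D).
\]

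\emph{Step 3: compare invariants.} Write any $d\in D^*$ as $d=l(1+m)$ with $l\in L^*$ and $m\in M_D$. Then $v(d)=v(l)$, so $\Gamma_D=\Gamma_L$. If moreover $d$ is a unit, then $l$ is a unit and $\bar d=\bar l$, so $\overline{D}=\overline{L}$.

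By the fundamental inequality for the finite extension $L/K$ of the Henselian field $K$,
\[
[\overline{L}:\overline{K}]\,[\Gamma_L:\Gamma_K]\le [L:K]=n.
\]
Since $D$ is tame, it is defectless, so
\[
n^2=[D:K]=[\overline{D}:\overline{K}]\,[\Gamma_D:\Gamma_K]=[\overline{L}:\overline{K}]\,[\Gamma_L:\Gamma_K]\le n.
\]
This gives $n\le 1$, a contradiction.

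The step needing the most care is the use of tameness. Only defectlessness of $D$ over $K$ is used, and I would check that it follows from the paper's definition of tame; it does under the standard definition. The other step to verify carefully is the spanning argument in Step 2, which is where nontriviality of the valuation enters.
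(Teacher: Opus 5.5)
Your proof is correct, and it takes a genuinely different route from the paper's. Both arguments hinge on the normal subgroup $1+M_D$ and the dichotomy $A(1+M_D)\in\{A,D^*\}$.

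\emph{The paper's route.} The paper never identifies the candidate subgroup. It shows that $D^*$ has property $(\mathcal P)$: $1+M_D$ is nonabelian (Theorem~\ref{thm:normals-nonabelian}), and $D^*/(1+M_D)\cong\mathbf{gr}(D)^*$ is nonabelian (Lemma~\ref{lem:tame-graded-noncommutative}). The second step uses \emph{both} halves of the tameness criterion. It is $Z(\mathbf{gr}(D))=\mathbf{gr}(K)$, together with $[\mathbf{gr}(D):\mathbf{gr}(K)]=[D:K]>1$, that keeps $\mathbf{gr}(D)$ from being commutative.

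\emph{Your route.} You first show $A=L^*$ for a maximal subfield $L$; the paper does this only later, in Proposition~\ref{prop:malnormal-obstruction}. You then read $D^*=L^*(1+M_D)$ as $\mathbf{gr}(D)=\mathbf{gr}(L)$. In effect, the paper extracts only that $\mathbf{gr}(D)^*$ would be abelian. You extract the stronger fact that the graded dimension of $\mathbf{gr}(D)$ over $\mathbf{gr}(K)$ is at most $[L:K]=\deg D$.

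\emph{What each buys.} Your argument needs only defectlessness, not the center condition. So it also covers defectless non-tame algebras, where $\mathbf{gr}(D)$ can be commutative and the $(\mathcal P)$ mechanism with $N=1+M_D$ breaks down. Even with no defectlessness assumption, it shows that an abelian maximal subgroup forces
\[
[\overline D:\overline K]\,|\Gamma_D:\Gamma_K|\le\deg D .
\]
The paper's formulation, in exchange, is a property of the abstract group $D^*$ that needs no degree count. That is what lets it run in the infinite-dimensional setting of Section~\ref{sec:beyond-tame}.

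A small simplification in your Step~2: once $V_D\subseteq L$, you get $D=L$ immediately. For every $d\in D^*$, either $d$ or $d^{-1}$ lies in $V_D$, so the spanning argument is unnecessary.
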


The proof uses valuation theory in a way complementary to Hazrat--Wadsworth's
work \cite{HaW1}.  Their valuation-theoretic examples produce maximal subgroups by
computing quotients such as $D^*/K^*(1+M_D)$.  Here we use the principal unit
group $1+M_D$ in the opposite direction: it becomes part of an obstruction to
the existence of abelian maximal subgroups.

The group-theoretic observation is elementary.  If a group $G$ has a normal
subgroup $N$ such that both $N$ and $G/N$ are nonabelian, then $G$ has no
abelian maximal subgroup.  We say in this case that $G$ \textit{has property
$(\mathcal P)$}.  For a valued division ring $D$, the natural normal subgroup
is $1+M_D$.  We prove that if $D$ is finite-dimensional over its center and
the valuation on the center is nontrivial, then $1+M_D$ is nonabelian exactly
when $D$ is noncommutative.  On the quotient side, the canonical epimorphism
$D^*\to \mathbf{gr}(D)^*$ has kernel $1+M_D$.  Thus it remains to show that
$\mathbf{gr}(D)^*$ is nonabelian.  In the tame Henselian case this follows
from the standard characterization
\[
        [\mathbf{gr}(D):\mathbf{gr}(K)]=[D:K]
        \quad\text{and}\quad
        Z(\mathbf{gr}(D))=\mathbf{gr}(K).
\]
If $D$ is noncommutative, these two conditions force $\mathbf{gr}(D)$ to be
noncommutative.

Theorem~\ref{thm:intro-main} also excludes locally nilpotent maximal
subgroups in the same class.  Indeed, Khanh and Hai proved in \cite{KH2} that every locally nilpotent maximal subgroup of the multiplicative group of a division ring is
abelian.  Hence a noncommutative tame finite-dimensional central
division algebra over a nontrivially Henselian valued field has no locally
nilpotent maximal subgroup in its multiplicative group.

We include several examples to show the scope of the obstruction.  The valued
cyclic division algebras given in \cite{HaW1} provide a basic tame family.  We
also consider their valued quaternion example over a euclidean field, where
the residue division ring is $\mathbb H$.  Classical cyclic division algebras
of Brauer and Dickson, as presented in Lam \cite{L}, are treated by placing natural
valuations on them and passing, when needed, to Henselizations or completions.
Beyond the finite-dimensional tame setting, the same mechanism applies to
twisted Laurent series, iterated Laurent series, and Mal'cev--Neumann division
rings.

Finally, we explain why the valuative obstruction is not the whole story.
Hamilton's quaternion division ring $\mathbb H$ does not fit the property
$(\mathcal P)$ mechanism, but $\mathbb H^*$ still has no abelian maximal
subgroup.  This leads to a second obstruction.  If $A$ is an abelian maximal
subgroup of $D^*$ and $D$ is finite-dimensional over its center $F$, then
$L=A\cup\{0\}$ is a maximal subfield of $D$, and $L^*/F^*$ must be malnormal
in $D^*/F^*$.  This malnormality obstruction rules out abelian maximal
subgroups in quaternion division algebras over real-closed fields, and hence
in $\mathbb H^*$.

The paper is organized as follows.  Section~\ref{sec:normal-obstruction}
contains the elementary valuative obstruction and the valuation-theoretic
criterion for $1+M_D$ to be nonabelian.  Section~\ref{sec:tame} proves
Theorem~\ref{thm:intro-main} and gives tame examples, including the valued
cyclic and valued quaternion examples of Hazrat--Wadsworth and the classical
examples of Brauer and Dickson.  Section~\ref{sec:beyond-tame} applies the
same method to standard infinite-dimensional valued division rings.  The final
section discusses Hamilton's quaternion division ring and develops the
malnormality obstruction.

\section{The property-$(\mathcal P)$ criterion}
\label{sec:normal-obstruction}

We begin with the elementary group-theoretic observation used throughout the
paper.

\begin{lemma}\label{lem:normal-obstruction}
Let $G$ be a group, and $N$ a normal subgroup of $G$. Suppose that $N$ is nonabelian and $G/N$ is nonabelian. Then $G$ contains no abelian maximal subgroup.
\end{lemma}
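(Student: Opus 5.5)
Suppose, for contradiction, that $M$ is an abelian maximal subgroup of $G$. The plan is to compare $M$ with the product $MN$, which is a subgroup because $N$ is normal, and satisfies $M \le MN \le G$. By maximality of $M$, exactly one of two cases holds: $MN = M$ or $MN = G$. I will rule out each case using one of the two nonabelian hypotheses.

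\emph{Case $MN = M$.} Then $N \le M$. Since $M$ is abelian, every subgroup of $M$ is abelian, so $N$ is abelian. This contradicts the hypothesis that $N$ is nonabelian.

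\emph{Case $MN = G$.} The canonical projection $\pi\colon G \to G/N$ restricted to $M$ has image $MN/N = G/N$. Hence $G/N$ is a homomorphic image of the abelian group $M$, so $G/N$ is abelian. This contradicts the hypothesis that $G/N$ is nonabelian. Equivalently, $G/N \cong M/(M\cap N)$ by the second isomorphism theorem.

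Both cases are impossible, so $G$ has no abelian maximal subgroup. There is no genuine obstacle here. The only point needing care is that $MN$ is a subgroup, which uses the normality of $N$, and that maximality means $M$ is a proper subgroup with no subgroup strictly between $M$ and $G$. This is exactly what justifies the dichotomy $MN \in \{M, G\}$.
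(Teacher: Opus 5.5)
Your proof is correct and is the same as the paper's. Both form $MN$, use maximality to get $MN \in \{M, G\}$, and rule out each case: $MN = M$ would make $N \le M$ abelian, and $MN = G$ would make $G/N \cong M/(M\cap N)$ abelian.
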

\begin{proof}
Suppose, to the contrary, that $A$ is an abelian maximal subgroup of $G$. Since $N$ is normal in $G$, the product $AN$ is a subgroup of $G$. Since $A\leq AN\leq G$ and $A$ is maximal, either $AN=A$ or $AN=G$. If $AN=A$, then $N\subseteq A$, so $N$ is abelian, a contradiction. Thus $AN=G$. It follows that 
$$
G/N=AN/N\cong A/(A\cap N),
$$
which is abelian, again a contradiction. Thus $G$ contains no abelian maximal subgroups.
\end{proof}
For later use, we say that a group $G$ has \textit{property $(\mathcal P)$} if it has a normal subgroup $N$ for which both $N$ and $G/N$ are nonabelian.

We use the standard notation of valuation theory for division rings; see Tignol--Wadsworth~\cite[§1.1.1]{TW}. Let $D$ be a division ring. A valuation on $D$ is a function
$$
        v:D\longrightarrow \Gamma\cup\{\infty\},
$$
where $\Gamma$ is a totally ordered additive abelian group and $\infty$ is a symbol satisfying $\gamma<\infty$ and $\gamma+\infty=\infty+\infty=\infty$ for all $\gamma\in\Gamma$, such that, for all $x,y\in D$,
\begin{enumerate}
        \item $v(x)=\infty$ if and only if $x=0$;
        \item $v(x+y)\geq \min\{v(x),v(y)\}$;
        \item $v(xy)=v(x)+v(y)$.
\end{enumerate}
The value group of $v$ is $\Gamma_D=v(D^*)\subseteq\Gamma$. Thus the restriction $v:D^*\to \Gamma_D$ is a group homomorphism.
Set
$$
        V_D=\{a\in D\mid v(a)\geq 0\}
        \quad
        \text{and}
        \quad
        M_D=\{a\in D\mid v(a)>0\}.
$$
Then $V_D$ is the valuation ring of $D$, and $M_D$ is a two-sided ideal of $V_D$.  Moreover, $M_D$ is the unique maximal ideal of $V_D$, and the quotient
$$
        \overline D=V_D/M_D
$$
is called the residue division ring of $D$.  For $a\in V_D$, we write $\overline a=a+M_D$ for its image in $\overline D$.

Let $\mathbf{gr}(D)$ be the associated graded ring of $D$ determined by $v$. For $\gamma\in\Gamma_D$, set 
$$
D^{\geq\gamma}=\{d\in D^*\mid v(d)\geq\gamma\}\cup\{0\} \quad
\text{and}
\quad
D^{>\gamma}=\{d\in D^*\mid v(d)>\gamma\}\cup\{0\}. 
$$
Then 
$$
\mathbf{gr}(D)=\bigoplus_{\gamma\in\Gamma_D}\mathbf{gr}(D)_\gamma, 
\quad 
\text{where}
\quad
\mathbf{gr}(D)_\gamma=D^{\geq\gamma}/D^{>\gamma}. 
$$
For $a\in D^*$, write $\widetilde a=a+D^{>v(a)}\in \mathbf{gr}(D)_{v(a)}$ and multiplication on homogeneous elements is defined by $\widetilde a\,\widetilde b=\widetilde{ab}$. This multiplication is well-defined and $\mathbf{gr}(D)$ is a graded division ring. Its nonzero homogeneous elements are precisely the units of $\mathbf{gr}(D)$. Thus the map 
\[ 
\rho:D^*\longrightarrow \mathbf{gr}(D)^*
\quad\text{given by}\quad
a\longmapsto \widetilde a, \] is a group epimorphism with kernel $1+M_D$.

We shall use the Fundamental Inequality for valued division algebras given in \cite[Prop.~1.3]{TW}.  Let $D$ be finite-dimensional over its center $K$, and let the valuation on $D$ restrict to the valuation on $K$. Then
\begin{equation}\label{eq:ineqality}
    [D:K] \geq [\overline D:\overline K]|\Gamma_D:\Gamma_K|.
\end{equation}

In particular, we have $|\Gamma_D:\Gamma_K|<\infty$.  The valuation is said to be defectless over $K$ if equality holds; equivalently,
\begin{equation}\label{eq:eqality}
    [D:K]=[\overline D:\overline K]\,|\Gamma_D:\Gamma_K|.
\end{equation}

Thus every tame division algebra over a Henselian valued field satisfies this equality; see \cite[Def.~8.4]{TW}.  

\begin{lemma}\label{lem:finite-index-cofinal}
Let $\Gamma$ be a totally ordered abelian group, and let $\Delta\subseteq\Gamma$ be a subgroup of finite index. If $\Delta$ contains a positive element, then, for each $\gamma\in\Gamma$, there exists $\delta\in\Delta$ such that $\gamma+\delta>0$.
\end{lemma}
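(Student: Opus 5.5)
The plan is to use the finite index of $\Delta$ to produce a positive element that is a multiple of $\gamma$'s coset representative structure, or more directly, to exploit that $n\gamma\in\Delta$ for $n=|\Gamma:\Delta|$. I will treat the cases $\gamma>0$ and $\gamma\le 0$ separately.

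\emph{The case $\gamma>0$.} Take $\delta=0\in\Delta$; then $\gamma+\delta=\gamma>0$.

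\emph{The case $\gamma\le 0$.} Let $n=|\Gamma:\Delta|$, which is finite by hypothesis. By Lagrange's theorem applied to the finite group $\Gamma/\Delta$, the coset $\gamma+\Delta$ has order dividing $n$, so $n\gamma\in\Delta$. Set $\delta=-n\gamma+\varepsilon$, where $\varepsilon\in\Delta$ is the given positive element. Then $\delta\in\Delta$, and
\[
\gamma+\delta=(1-n)\gamma+\varepsilon .
\]
Since $\gamma\le 0$ and $1-n\le 0$, the element $(1-n)\gamma$ is a nonnegative integer multiple of $-\gamma\ge 0$. In a totally ordered abelian group, a nonnegative integer multiple of a nonnegative element is nonnegative, so $(1-n)\gamma\ge 0$. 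Adding the positive element $\varepsilon$ gives $\gamma+\delta>0$.

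No step here is a real obstacle. The one point needing care is the ordering of the integer multiple $(1-n)\gamma$: compatibility of the order with addition gives $0\le x\Rightarrow 0\le kx$ for $k\ge 0$ by induction. Alternatively, one could avoid $n$ altogether by choosing $\delta=-n\gamma$ and using $n\ge 2$ when $\gamma<0$. However, the version with $\varepsilon$ also handles $\gamma=0$ and $n=1$ uniformly, so I would use it.
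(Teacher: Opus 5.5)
Your proof is correct and is essentially the paper's argument: both rely on $n\gamma\in\Delta$ together with the positive element $\varepsilon$. The one difference is that you always take $\delta=-n\gamma+\varepsilon$, which merges the paper's separate subcases ($\gamma=0$; $\gamma<0$ with $n=1$; $\gamma<0$ with $n>1$) into a single computation. Your closing side remark is slightly off, though: without $\varepsilon$ you cannot assume $n\geq 2$, since $\Delta=\Gamma$ is allowed, and that is exactly why the paper treats $n=1$ separately.
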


\begin{proof}
Choose $0<\varepsilon\in\Delta$. Let $\gamma\in\Gamma$. If $\gamma>0$, then we take $\delta=0$. If $\gamma=0$, then we take $\delta=\varepsilon$. It remains to consider the case $\gamma<0$. Since $\Gamma/\Delta$ is finite, there exists an integer $n\geq 1$ such that $n\gamma\in\Delta$. If $n=1$, then $\gamma\in\Delta$, and we take $\delta=-\gamma+\varepsilon\in\Delta$ so that $\gamma+\delta=\varepsilon>0$. If $n>1$, take $\delta=-n\gamma\in\Delta$. It follows that
$$        \gamma+\delta
        =
        \gamma-n\gamma
        =
        -(n-1)\gamma.$$

Since $\gamma<0$, we have $-(n-1)\gamma>0$. Hence $\gamma+\delta>0$. The proof is now complete.
\end{proof}

\begin{theorem}
\label{thm:normals-nonabelian}
Let $D$ be a division ring finite-dimensional over its center $K$. Suppose that $D$ is endowed with a valuation extending a nontrivial valuation on $K$. Then $1+M_D$ is nonabelian if and only if $D$ is noncommutative.
\end{theorem}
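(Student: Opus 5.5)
One direction is immediate: if $D$ is commutative, then $1+M_D\subseteq D^*$ is abelian. So the content is the converse. Assume $D$ is noncommutative, and suppose for contradiction that $1+M_D$ is abelian. The plan is to show that then every element of $D$ commutes with every other, which contradicts noncommutativity.

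\textbf{Step 1: $M_D$ is commutative.} For $x,y\in M_D$, the elements $1+x$ and $1+y$ commute. Expanding $(1+x)(1+y)=(1+y)(1+x)$ gives $xy=yx$.

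\textbf{Step 2: scaling an arbitrary element into $M_D$ by a central element.} Let $d\in D^*$ be arbitrary. We want $c\in K^*$ with $v(cd)>0$. The valuation on $K$ is nontrivial, so $\Gamma_K$ contains a positive element. By the Fundamental Inequality \eqref{eq:ineqality}, $\Gamma_K$ has finite index in $\Gamma_D$. Lemma~\ref{lem:finite-index-cofinal}, applied with $\gamma=v(d)$ and $\Delta=\Gamma_K$, produces $\delta\in\Gamma_K$ with $v(d)+\delta>0$. Choosing $c\in K^*$ with $v(c)=\delta$ gives $cd\in M_D$.

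\textbf{Step 3: conclusion.} Take any $d,e\in D^*$ and choose $c,c'\in K^*$ as in Step 2, so that $cd,c'e\in M_D$. By Step 1, $cd$ and $c'e$ commute. Since $c,c'$ are central and nonzero, we can cancel them, so $de=ed$. Hence $D$ is commutative, a contradiction.

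The only nontrivial ingredient is Step 2, namely the cofinality of $\Gamma_K$ in $\Gamma_D$. This is exactly where finite-dimensionality (through the Fundamental Inequality) and nontriviality of the valuation on $K$ are used, and Lemma~\ref{lem:finite-index-cofinal} supplies it directly. Everything else is routine.
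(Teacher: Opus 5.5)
Your proof is correct and is essentially the paper's argument: both use the Fundamental Inequality and Lemma~\ref{lem:finite-index-cofinal} to move elements into $M_D$ by multiplying with central scalars, and then use $(1+a)(1+b)-(1+b)(1+a)=ab-ba$. The paper argues directly from a noncommuting pair $x,y$, while you argue by contradiction assuming $1+M_D$ is abelian; this is only a difference in presentation.
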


\begin{proof}
If $1+M_D$ is nonabelian, then $D^*$ is nonabelian, and so $D$ is noncommutative. Conversely, suppose that $D$ is noncommutative. Choose $x,y\in D^*$ such that $xy\neq yx$. Since $D$ is finite-dimensional over $K$, the inequality (\ref{eq:ineqality}) shows that the group $\Gamma_D/\Gamma_K$ is finite. Since the valuation on $K$ is nontrivial, there exists $a\in K^*$ such that $v(a)\neq 0$. If $v(a)>0$, then $v(a)$ is a positive element of
$\Gamma_K$. If $v(a)<0$, then $v(a^{-1})=-v(a)>0$. Hence $\Gamma_K$ contains a positive element. By Lemma~\ref{lem:finite-index-cofinal}, there exist $c,d\in K^*$ such that $v(cx)>0$ and $v(dy)>0$. This implies that $cx,dy\in M_D$.

Since $c,d\in K^*$ are central, we have
$$        
(cx)(dy)-(dy)(cx)=cd(xy-yx)\neq 0.
$$
Thus $M_D$ contains two noncommuting elements, from which it follows that $1+M_D$ is nonabelian.
\end{proof}

\begin{corollary}
\label{cor:normal-obstruction}
Let $D$ be a division ring finite-dimensional over its center $K$, and suppose
that $D$ carries a valuation extending a nontrivial valuation on $K$.  If $D$
is noncommutative and $\mathbf{gr}(D)^*$ is nonabelian, then $D^*$ contains
no abelian maximal subgroup.
\end{corollary}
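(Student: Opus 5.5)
The plan is to verify that $D^*$ has property $(\mathcal P)$, with $N=1+M_D$ as the normal subgroup, and then apply Lemma~\ref{lem:normal-obstruction}.

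First, $1+M_D$ is normal in $D^*$. The cleanest way to see this is that it is the kernel of the group epimorphism $\rho:D^*\to\mathbf{gr}(D)^*$, $a\mapsto\widetilde a$, recorded above. One can also check it directly: for $d\in D^*$ and $m\in M_D$,
$$d(1+m)d^{-1}=1+dmd^{-1},$$
and $v(dmd^{-1})=v(m)>0$ because $\Gamma_D$ is abelian.

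Second, $N$ is nonabelian. Since $D$ is noncommutative, finite-dimensional over $K$, and the valuation on $K$ is nontrivial, Theorem~\ref{thm:normals-nonabelian} applies directly.

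Third, $D^*/N$ is nonabelian. Because $\rho$ is surjective with kernel $1+M_D$, the first isomorphism theorem gives
$$D^*/(1+M_D)\cong\mathbf{gr}(D)^*,$$
and $\mathbf{gr}(D)^*$ is nonabelian by hypothesis. So $D^*$ has property $(\mathcal P)$, and Lemma~\ref{lem:normal-obstruction} shows that $D^*$ contains no abelian maximal subgroup.

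All the ingredients are already established, so I see no genuine obstacle. The only point to state explicitly is that the kernel of $\rho$ is exactly $1+M_D$: we have $\widetilde a=\widetilde 1$ if and only if $v(a-1)>v(1)=0$, that is, $a\in 1+M_D$. This identification, together with surjectivity of $\rho$ onto the homogeneous units, is what justifies the isomorphism used in the third step.
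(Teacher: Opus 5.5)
Your proposal is correct and follows the paper's proof: take $N=1+M_D$, get that $N$ is nonabelian from Theorem~\ref{thm:normals-nonabelian}, identify $D^*/(1+M_D)$ with $\mathbf{gr}(D)^*$ via $\rho$, and conclude by Lemma~\ref{lem:normal-obstruction}. Your explicit check that $\ker\rho=1+M_D$, and that $1+M_D$ is normal, spells out what the paper takes as already recorded before Lemma~\ref{lem:finite-index-cofinal}.
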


\begin{proof}
According to Theorem~\ref{thm:normals-nonabelian} we conclude that that $1+M_D$ is nonabelian. Moreover, as
$$
        D^*/(1+M_D)\cong \mathbf{gr}(D)^*, 
$$
which is non-abelian, we conclude that $D^*$ has property $(\mathcal P)$. Thus, the result follows from Lemma \ref{lem:normal-obstruction}.
\end{proof}

\section{Tame division algebras}
\label{sec:tame}
Let $(K,v)$ be a Henselian valued field with nontrivial valuation, and let
$D$ be a finite-dimensional central division algebra over $K$. The valuation
$v$ extends uniquely to a valuation on $D$. We keep the notation $V_D$,
$M_D$, $\overline D$, and $\mathbf{gr}(D)$ introduced above, and we write
$\mathbf{gr}(K)$ for the associated graded field of $K$. The division algebra $D$
is tame over $K$ if and only if
\begin{equation}\label{eq:tame}
    [\mathbf{gr}(D):\mathbf{gr}(K)]=[D:K]
    \quad\text{and}\quad
    Z(\mathbf{gr}(D))=\mathbf{gr}(K).
\end{equation}

Equivalently, $D$ is defectless over $K$ and the center of the associated
graded division algebra is as small as possible. This is the form of the
tameness condition needed below.

\begin{lemma}
\label{lem:tame-graded-noncommutative}
Let $K$ be a Henselian valued field with nontrivial valuation, and let $D$ be a tame finite-dimensional central division algebra over $K$. If $D$ is
noncommutative, then $\mathbf{gr}(D)$ is noncommutative. Consequently,
$\mathbf{gr}(D)^*$ is nonabelian.
\end{lemma}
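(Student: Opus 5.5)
The argument is by contradiction, using the two tameness conditions in \eqref{eq:tame}. Suppose $D$ is noncommutative but $\mathbf{gr}(D)$ is commutative. Then $\mathbf{gr}(D)$ equals its own center, so the second condition in \eqref{eq:tame} gives $\mathbf{gr}(D)=Z(\mathbf{gr}(D))=\mathbf{gr}(K)$. Hence $[\mathbf{gr}(D):\mathbf{gr}(K)]=1$. The first condition in \eqref{eq:tame} then gives $[D:K]=1$, so $D=K$ is commutative, which is a contradiction. Therefore $\mathbf{gr}(D)$ is noncommutative.

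The dimension $[\mathbf{gr}(D):\mathbf{gr}(K)]$ should be read as the rank of $\mathbf{gr}(D)$ as a graded module over the graded field $\mathbf{gr}(K)$. Graded modules over a graded field are free, and their ranks are well defined (see \cite{TW}). In particular, rank $1$ together with $1\in\mathbf{gr}(K)\subseteq\mathbf{gr}(D)$ forces $\mathbf{gr}(D)=\mathbf{gr}(K)$. We only use the equality $\mathbf{gr}(D)=\mathbf{gr}(K)$, which gives rank $1$ directly, so no subtlety arises here.

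For the consequence, recall that $\mathbf{gr}(D)$ is spanned additively by its homogeneous elements, and every nonzero homogeneous element is a unit. If $\mathbf{gr}(D)^*$ were abelian, any two homogeneous elements would commute. Indeed, two nonzero ones lie in $\mathbf{gr}(D)^*$, and $0$ commutes with everything. By bilinearity of multiplication, $\mathbf{gr}(D)$ would then be commutative, contradicting the first part. Hence $\mathbf{gr}(D)^*$ is nonabelian.

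I expect no real obstacle here. The only point needing care is the final step, namely that the group of units $\mathbf{gr}(D)^*$ consists of the nonzero homogeneous elements and that these span $\mathbf{gr}(D)$. Both facts were recorded earlier in the paper.
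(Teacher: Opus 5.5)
Your proof is correct and is essentially the paper's argument in contrapositive form. The paper passes from $[D:K]>1$ to $[\mathbf{gr}(D):\mathbf{gr}(K)]>1$ and concludes that $\mathbf{gr}(D)$ is noncommutative, tacitly using $Z(\mathbf{gr}(D))=\mathbf{gr}(K)$, which you state explicitly; the step from $\mathbf{gr}(D)$ to $\mathbf{gr}(D)^*$ via homogeneous units spanning $\mathbf{gr}(D)$ is identical.
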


\begin{proof}
Since $D$ is noncommutative, we get $[D:K]>1$; thus, by equation (\ref{eq:tame}) we get $[\mathbf{gr}(D):\mathbf{gr}(K)]>1$. It follows from that $\mathbf{gr}(D)$ is noncommutative. Finally, if $\mathbf{gr}(D)^*$ were abelian, then all nonzero homogeneous
elements of $\mathbf{gr}(D)$ would commute, which would make $\mathbf{gr}(D)$
commutative, a contradiction. Hence $\mathbf{gr}(D)^*$ is nonabelian.
\end{proof}

\begin{theorem}\label{thm:main-tame}
Let $K$ be a Henselian valued field with nontrivial valuation, and let $D$be a tame finite-dimensional central division algebra over $K$. If $D$ is noncommutative, then $D^*$ contains no abelian maximal subgroup.
\end{theorem}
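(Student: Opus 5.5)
The plan is to reduce Theorem~\ref{thm:main-tame} directly to Corollary~\ref{cor:normal-obstruction}. That corollary needs three hypotheses: finite-dimensionality over the center with a valuation extending a nontrivial valuation of the center, noncommutativity of $D$, and non-abelianness of $\mathbf{gr}(D)^*$. We verify them in that order.

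First, $D$ is finite-dimensional and central over $K$, so its center is exactly $K$. Since $K$ is Henselian, the valuation $v$ on $K$ extends uniquely to a valuation on $D$ (as recalled at the start of Section~\ref{sec:tame}), and by hypothesis $v$ is nontrivial on $K$. This gives the valuative hypotheses of Theorem~\ref{thm:normals-nonabelian}. Since $D$ is noncommutative, that theorem shows $1+M_D$ is nonabelian.

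Second, Lemma~\ref{lem:tame-graded-noncommutative} applies verbatim to the tame $D$ and gives that $\mathbf{gr}(D)^*$ is nonabelian. One point in that lemma deserves a fuller argument, and I would supply it here. Suppose $\mathbf{gr}(D)$ were commutative. Then $Z(\mathbf{gr}(D))=\mathbf{gr}(D)$. Tameness~\eqref{eq:tame} gives $Z(\mathbf{gr}(D))=\mathbf{gr}(K)$, so $\mathbf{gr}(D)=\mathbf{gr}(K)$ and $[\mathbf{gr}(D):\mathbf{gr}(K)]=1$. But \eqref{eq:tame} also gives $[\mathbf{gr}(D):\mathbf{gr}(K)]=[D:K]>1$, a contradiction. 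Hence $\mathbf{gr}(D)$ is noncommutative. Because $\mathbf{gr}(D)$ is additively spanned by homogeneous elements, and the nonzero ones are exactly the units, the group $\mathbf{gr}(D)^*$ is nonabelian.

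Third, the epimorphism $\rho\colon D^*\to\mathbf{gr}(D)^*$ has kernel $1+M_D$, so $D^*/(1+M_D)\cong\mathbf{gr}(D)^*$ is nonabelian. Thus $D^*$ has property $(\mathcal P)$ with $N=1+M_D$. Corollary~\ref{cor:normal-obstruction}, equivalently Lemma~\ref{lem:normal-obstruction}, then shows that $D^*$ has no abelian maximal subgroup.

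I do not expect a genuine obstacle, since all the ingredients are stated earlier. The only delicate step is the one in the second paragraph: both halves of~\eqref{eq:tame} are needed to force $\mathbf{gr}(D)$ to be noncommutative. Using only the dimension equality would not suffice, because a commutative graded field can have large degree over $\mathbf{gr}(K)$.
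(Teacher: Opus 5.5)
Your proposal is correct and follows essentially the same route as the paper's proof: the theorem on $1+M_D$ shows it is nonabelian, the lemma on tame algebras shows $\mathbf{gr}(D)^*$ is nonabelian, and property $(\mathcal P)$ with $N=1+M_D$ and $D^*/(1+M_D)\cong\mathbf{gr}(D)^*$ finishes the argument. Your explicit use of $Z(\mathbf{gr}(D))=\mathbf{gr}(K)$ to rule out a commutative $\mathbf{gr}(D)$ usefully fills in the step that the paper's lemma states only tersely.
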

\begin{proof}
Since $D$ is noncommutative and finite-dimensional over its center $K$, and since the valuation on $K$ is nontrivial, Theorem~\ref{thm:normals-nonabelian} shows that $1+M_D$ is nonabelian. By Lemma~\ref{lem:tame-graded-noncommutative}, $\mathbf{gr}(D)^*$ is nonabelian. It follows that $D^*/(1+M_D)\cong\mathbf{gr}(D)^*$ is nonabelian. Therefore $D^*$ has property $(\mathcal P)$, and so $D^*$ contains no abelian maximal subgroup.
\end{proof}

\begin{corollary}
\label{cor:no-locally-nilpotent-maximal}
Let $K$ be a Henselian valued field with nontrivial valuation, and let $D$ be a noncommutative tame finite-dimensional central division algebra over $K$. Then $D^*$ contains no locally nilpotent maximal subgroup.
\end{corollary}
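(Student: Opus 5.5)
The plan is to combine Theorem~\ref{thm:main-tame} with the result of Khanh and Hai from \cite{KH2}, which says that every locally nilpotent maximal subgroup of the multiplicative group of a division ring is abelian. No new valuation theory is needed: the corollary should be a formal consequence of these two facts.

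\textbf{Step 1.} Argue by contradiction. Suppose $M$ is a locally nilpotent maximal subgroup of $D^*$.

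\textbf{Step 2.} Apply the theorem of \cite{KH2} to $D$, viewed simply as a division ring; it imposes no hypothesis on the center or on the dimension. It yields that $M$ is abelian.

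\textbf{Step 3.} Then $M$ is an abelian maximal subgroup of $D^*$. But the hypotheses of the corollary are exactly those of Theorem~\ref{thm:main-tame}: $K$ is Henselian with nontrivial valuation, and $D$ is a noncommutative tame finite-dimensional central division algebra over $K$. That theorem says $D^*$ has no abelian maximal subgroup, which is a contradiction.

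The only point needing care is that the cited result of \cite{KH2} applies in full generality to maximal subgroups of $D^*$ for an arbitrary division ring $D$, and not just to maximal subgroups of subnormal subgroups under extra hypotheses. If only the nilpotent version were available, one could fall back on Ebrahimian's result \cite{E} that nilpotent maximal subgroups are abelian. That would, however, cover only nilpotent rather than locally nilpotent subgroups, so the corollary genuinely relies on the locally nilpotent statement of \cite{KH2}, as described in the introduction. Beyond this, the argument is immediate.
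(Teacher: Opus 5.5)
Your proposal is correct and matches the paper's proof: the paper likewise combines Theorem~\ref{thm:main-tame} with the result of \cite{KH2} that every locally nilpotent maximal subgroup of the multiplicative group of a division ring is abelian. Your remark that Ebrahimian's nilpotent result alone would not suffice is also accurate.
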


\begin{proof}
By Theorem~\ref{thm:main-tame}, $D^*$ contains no abelian maximal subgroup.
On the other hand, it was proved in \cite{KH2} that every locally nilpotent maximal subgroup of the multiplicative group of a division ring is abelian. Hence $D^*$ contains no locally nilpotent maximal subgroup.
\end{proof}

We next recall a standard family of tame cyclic division algebras given in \cite[Example 8]{HaW1}, which nicely illustrates Theorem~\ref{thm:main-tame}.

\begin{example}
\label{ex:valued-cyclic}
Let $K$ be a Henselian field with a discrete rank-one valuation, 
so that $\Gamma_K=\mathbb Z$.  Let $L/K$ be an unramified cyclic Galois
extension of degree $n>1$, with $\operatorname{Gal}(L/K)=\langle\sigma\rangle$.
Let $\pi\in K^*$ with $v(\pi)=1$, and let $D=(L/K,\sigma,\pi)$ be the cyclic algebra associated with $(L/K,\sigma)$ and $\pi$, so that $D=\bigoplus_{i=0}^{n-1}Lx^i$, in which
\begin{equation}\label{eq:cyclic-multiplication}
    xcx^{-1}=\sigma(c)\ \text{for all }c\in L\quad\text{and}\quad x^n=\pi.
\end{equation}
It was shown in \cite[Example~8]{HaW1} that the
valuation extends to $D$ by setting
\begin{equation}\label{eq:valuation fomula}
     v\left(\sum_{i=0}^{n-1}c_i x^i\right)
        =
        \min_{0\leq i\leq n-1}\left\{v(c_i)+\frac{i}{n}\right\}.
\end{equation}
With this valuation, we have
\[
        \overline D=\overline L
        \quad\text{and}\quad
        \Gamma_D=\frac1n\mathbb Z.
\]
Hence
\[
        [\overline D:\overline K]\,|\Gamma_D:\Gamma_K|
        =
        n^2
        =
        [D:K].
\]
Moreover, $Z(\overline D)=\overline L$ is separable over $\overline K$, and
so $D$ is a tame
semiramified division algebra.  Since $n>1$, the algebra $D$ is
noncommutative, and Theorem~\ref{thm:main-tame} gives that $D^*$ contains no
abelian maximal subgroup. It was also shown that
\[
        D^*/K^*(1+M_D)
        \cong
        \overline L^*/\overline K^*\rtimes \mathbb Z/n\mathbb Z,
\]
where the generator of $\mathbb Z/n\mathbb Z$ acts through the residue
automorphism induced by $\sigma$.  If $K$ is a local field with residue field
$\mathbb F_q$, this becomes
\[
        \left(\mathbb Z/\frac{q^n-1}{q-1}\mathbb Z\right)
        \rtimes \mathbb Z/n\mathbb Z,
\]
with the generator acting by multiplication by $q$.  For $n=2$, this is the
dihedral group of order $2(q+1)$.

In what follows, we also compute the group $1+M_K$ and $1+M_D$.  Since $L/K$ is
unramified, the valuation ring of $L$ is $V_L$, its maximal ideal is
$M_L=\pi V_L$, and $\Gamma_L=\Gamma_K=\mathbb Z$.  Hence
\[
        M_K=\pi V_K
        \quad
        \text{and}
        \quad
        1+M_K=1+\pi V_K.
\]
The formula (\ref{eq:valuation fomula}) implies that
\[
        V_D=V_L\oplus V_Lx\oplus\cdots\oplus V_Lx^{n-1}
\]
and
\[
        M_D=\pi V_L\oplus V_Lx\oplus\cdots\oplus V_Lx^{n-1}.
\]
Equivalently, since $x^n=\pi$, one has
\[
        M_D=xV_D=V_Dx.
\]
Thus
\[
        1+M_D
        =
        \left\{
        1+a_0+a_1x+\cdots+a_{n-1}x^{n-1}
        \mid
        a_0\in \pi V_L,\ a_i\in V_L\text{ for }1\leq i\leq n-1
        \right\}.
\]
This is equivalent to,
\[
        1+M_D
        =
        \left\{
        b_0+b_1x+\cdots+b_{n-1}x^{n-1}
        \mid
        b_0\in 1+\pi V_L,\ b_i\in V_L\text{ for }1\leq i\leq n-1
        \right\}.
\]

The multiplication in $1+M_D$ is determined by the relations (\ref{eq:cyclic-multiplication}): If $u=\sum_{i=0}^{n-1}b_i x^i$ and $w=\sum_{j=0}^{n-1}d_j x^j$, with $b_0,d_0\in 1+\pi V_L$ and $b_i,d_j\in V_L$ for $i,j\geq 1$, then
$uw=\sum_{r=0}^{n-1}e_r x^r$, where
\[
        e_r=
        \sum_{\substack{0\leq i,j\leq n-1\\ i+j\equiv r\pmod n}}
        b_i\sigma^i(d_j)\pi^{\lfloor (i+j)/n\rfloor}.
\]
In particular, $e_0\in 1+\pi V_L$ and $e_r\in V_L$ for $r\geq 1$, so
$uw\in 1+M_D$.

We can also prove that $1+M_D$ is nonabelian.  Choose
$c\in V_L$ such that $\overline{\sigma(c)}\ne \overline c$ in $\overline L$.
Then $1+x$ and $1+cx$ lie in $1+M_D$, and
\[
        (1+x)(1+cx)-(1+cx)(1+x)=(\sigma(c)-c)x^2\ne 0.
\]
In the special case $K=\mathbb Q_p$, we take $\pi=p$, $V_K=\mathbb Z_p$,
and $M_K=p\mathbb Z_p$.  Thus
\[
        1+M_K=1+p\mathbb Z_p.
\]
Let $L/\mathbb Q_p$ be the unramified extension of degree $n$, and let
$\mathcal O_L=V_L$ be its ring of integers.  Then $M_L=p\mathcal O_L$ and
$\overline L\cong\mathbb F_{p^n}$.  For $D=(L/\mathbb Q_p,\sigma,p)$, we have
\[
        V_D=\mathcal O_L\oplus \mathcal O_Lx\oplus\cdots\oplus
        \mathcal O_Lx^{n-1},
\]
and
\[
        M_D=p\mathcal O_L\oplus \mathcal O_Lx\oplus\cdots\oplus
        \mathcal O_Lx^{n-1}=xV_D=V_Dx.
\]
Consequently,
\[
        1+M_D
        =
        \left\{
        b_0+b_1x+\cdots+b_{n-1}x^{n-1}
        \mid
        b_0\in 1+p\mathcal O_L,\ b_i\in\mathcal O_L\text{ for }1\leq i\leq n-1
        \right\}.
\]
It follows that
\[
        V_K^*/(1+M_K)\cong\mathbb F_p^*
        \quad
        \text{and}
        \quad
        V_D^*/(1+M_D)\cong\mathbb F_{p^n}^*.
\]
Moreover, if $p$ is odd, then we have
\[
        1+p\mathbb Z_p\cong p\mathbb Z_p\cong\mathbb Z_p.
\]
For $p=2$, one has
\[
        1+2\mathbb Z_2=\{\pm 1\}\times (1+4\mathbb Z_2)
        \quad
        \text{and}
        \quad
        1+4\mathbb Z_2\cong\mathbb Z_2.
\]
Finally, one has
\[
        1+M_D\supseteq 1+M_D^2\supseteq 1+M_D^3\supseteq\cdots
        \quad\text{and}\quad M_D^r=x^rV_D,
\]
and, for every $r\geq 1$,
\[
        (1+M_D^r)/(1+M_D^{r+1})
        \cong
        M_D^r/M_D^{r+1}
        \cong
        \mathbb F_{p^n}.
\]
\end{example}

\begin{example}[A valued quaternion division algebra]
\label{ex:valued-quaternion}
Let $K_0=\mathbb R((x))$, ordered so that $x>0$, and let $\mathcal R$ be a real closure of $K_0$ with respect to this ordering. Define a tower of subfields of $\mathcal R$ by 
\[ 
K_0\subseteq K_1\subseteq K_2\subseteq\cdots 
\quad 
\text{and}
\quad
K_{r+1}=K_r\bigl(\{\sqrt c\mid c\in K_r,\ c>0\}\bigr). 
\] 
The Euclidean hull of $K_0$ in $\mathcal R$ is \[ K=\bigcup_{r\geq 0}K_r. \] The ordering of $\mathcal R$ restricts to an ordering of $K$, and every positive element of $K$ is a square. Thus $K$ is Euclidean. It was shown in \cite[Example~7]{HaW1} that the $x$-adic valuation on $K_0$ extends to a Henselian valuation on $K$ with 
\[ 
\Gamma_K\cong \mathbb Z[1/2]
\quad
\text{and}
\quad
\overline K\cong \mathbb R. 
\] 
Let 
\[ 
D=\left(\frac{-1,-1}{K}\right) = K\oplus Ki\oplus Kj\oplus Kij, 
\]
where $i^2=j^2=-1$ and $ij=-ji$. The valuation on $K$ extends uniquely to $D$, so that
\[
        \overline D\cong
        \left(\frac{-1,-1}{\mathbb R}\right)
        =
        \mathbb H
        \quad\text{and}\quad
        \Gamma_D=\Gamma_K.
\]
Thus $D$ is inertial over $K$.  In particular,
\[
        [\overline D:\overline K]\,|\Gamma_D:\Gamma_K|
        =
        4
        =
        [D:K].
\]
Since $\operatorname{char}(\overline K)=0$, the
division algebra $D$ is tame.

The valuation can be described explicitly on the standard quaternion basis.
For
\[
        a=a_0+a_1i+a_2j+a_3ij\in D,
\]
where $a_r\in K$, one has
\[
        v_D(a)=\min\{v_K(a_0),v_K(a_1),v_K(a_2),v_K(a_3)\}.
\]
Consequently
\[
        V_D=V_K\oplus V_Ki\oplus V_Kj\oplus V_Kij
\]
and
\[
        M_D=M_K\oplus M_Ki\oplus M_Kj\oplus M_Kij.
\]
Here
\[
        M_K=\{a\in K\mid v_K(a)>0\},
        \quad\text{and}\quad
        1+M_K=\{a\in V_K^*\mid \overline a=1\}.
\]
Since $\Gamma_K\cong\mathbb Z[1/2]$ is not discrete, $M_K$ is not generated by a single element.  The subgroup $1+M_D$ of $D^*$ is given by
\[
        1+M_D
        =
        \left\{
        b_0+b_1i+b_2j+b_3ij
        \mid
        b_0\in 1+M_K,\ b_1,b_2,b_3\in M_K
        \right\}.
\]
Equivalently,
\[
        1+M_D
        =
        1+(M_K\oplus M_Ki\oplus M_Kj\oplus M_Kij).
\]

This example lies in the tame case.  Since $D$ is noncommutative, Theorem~\ref{thm:main-tame} gives that $D^*$ contains no abelian maximal
subgroup.  The noncommutativity of $1+M_D$ is also proved directly: if $0\ne t\in M_K$, then $1+ti$ and $1+tj$ lie in $1+M_D$, and
\[
        (1+ti)(1+tj)-(1+tj)(1+ti)=2t^2ij\ne 0.
\]
\end{example}

We next give two classical cyclic division algebras, due to Brauer and Dickson, in a form suited to the valuative obstruction.  The original algebras are not defined over Henselian centers.  In each case, however, the center carries a natural nontrivial valuation that extends to the algebra. After passing to a Henselization or completion, Morandi's theorem preserves the associated graded division algebra; see \cite[Th.~4.1]{TW}. Thus the resulting Henselian examples fall under Theorem~\ref{thm:main-tame}, while the original algebras are covered by Corollary~\ref{cor:normal-obstruction}.

\begin{example}[Brauer's cyclic division algebra]
\label{ex:brauer-cyclic}
Let $E=\mathbb Q(y,z)$, where $y$ and $z$ are independent commuting indeterminates over $\mathbb Q$.  Let $\sigma$ be the $\mathbb Q$-automorphism of $E$ given by $\sigma(y)=z$ and $\sigma(z)=-y$.  Then $\sigma$ has order
$4$.  Put $F=E^\sigma$, and consider
\[
        D_0=(E/F,\sigma,-1)
        =
        E\oplus Ex\oplus Ex^2\oplus Ex^3,
\]
where $x^4=-1$ and $xc=\sigma(c)x$ for $c\in E$.  Lam presents in \cite[pp.~224--226]{L} Brauer's example and proves that $D_0$ is a division algebra of degree $4$ over $F$.

We define a valuation on $D_0$ from the total degree in $y$ and $z$.  Let
$P=\mathbb Q[y,z]$, and set 
$$
R=P\oplus Px\oplus Px^2\oplus Px^3\subseteq D_0.
$$
Give $R$ the grading by total degree in $y,z$, with $x$ of degree $0$.  Thus $R_d$ is the $\mathbb Q$-span of the monomials $y^m z^n x^i$ with $m+n=d$ and $0\leq i\leq 3$.  It was shown in \cite{L} that $R$ is a graded domain.  Hence, if $\delta(r)$ denotes the largest total degree occurring in a nonzero element $r\in R$, then $\delta(rs)=\delta(r)+\delta(s)$ for all nonzero $r,s\in R$.

Every element of $D_0$ has the form $q^{-1}r$ with $0\ne q\in P$ and
$r\in R$.  Define
\[
        v(q^{-1}r)=\delta(q)-\delta(r).
\]
This is well-defined: if $q^{-1}r=q'^{-1}r'$, then $q'r=qr'$, and the multiplicativity of $\delta$ gives
$\delta(q')+\delta(r)=\delta(q)+\delta(r')$.  The same multiplicativity shows that $v$ is a valuation on $D_0$.  Its restriction to $F$ is nontrivial, because $y^2+z^2\in F$ and $v(y^2+z^2)=-2$.

Let $F^h$ be a Henselization of $F$ with respect to $v|_F$, and set $D=D_0\otimes_F F^h$.  By Morandi's theorem, $D$ is a division algebra and $\mathbf{gr}(D)\cong\mathbf{gr}(D_0)$.  The residue field has characteristic $0$, so $D$ is tame over $F^h$ by  \cite[Def.~8.4]{TW}.

Since $xy=zx$, we have $\widetilde x\,\widetilde y=\widetilde z\,\widetilde x$ in $\mathbf{gr}(D)$.  But $\widetilde y\ne\widetilde z$, since $v(y)=v(z)=-1$ and $v(y-z)=-1$.  As $\widetilde x \in \mathbf{gr}(D)^*$, it follows that $\widetilde x\,\widetilde y\ne\widetilde y\,\widetilde x$.  Hence $\mathbf{gr}(D)$ is noncommutative.  By Theorem~\ref{thm:main-tame}, $D^*$ contains no abelian maximal subgroup.

The same graded computation, we get that $\mathbf{gr}(D_0)^*$ is nonabelian, and Corollary~\ref{cor:normal-obstruction} shows that $D_0^*$ contains no abelian maximal subgroup.
\end{example}

\begin{example}[Dickson's cyclic division algebra]
\label{ex:dickson-cyclic}
Let $f(t)=t^3+t^2-2t-1\in\mathbb Q[t]$, let $u$ be a root of $f$, and put $E=\mathbb Q(u)$.  Then $E/\mathbb Q$ is a cyclic cubic extension.  Let $\sigma$ be the generator determined by $\sigma(u)=u^2-2$.  Dickson's algebra is
\[
        D_0=(E/\mathbb Q,\sigma,2)=E\oplus Ex\oplus Ex^2,
\]
where $x^3=2$ and $xu=(u^2-2)x$.  It was shown in \cite[p.~227]{L} that $D_0$ is a $9$-dimensional division algebra over $\mathbb Q$.

Since $f(t)$ is irreducible modulo $2$, the field $E_2=\mathbb Q_2(u)$ is an unramified extension of $\mathbb Q_2$.  Set
\[
        D=D_0\otimes_{\mathbb Q}\mathbb Q_2
        \cong
        (E_2/\mathbb Q_2,\sigma,2).
\]
Since $E_2/\mathbb Q_2$ is unramified of degree $3$, for every
$\alpha\in E_2^*$ one has
\[
        v_{\mathbb Q_2}\bigl(N_{E_2/\mathbb Q_2}(\alpha)\bigr)
        =
        3v_{E_2}(\alpha)\in 3\mathbb Z.
\]
As $v_{\mathbb Q_2}(2)=1$, it follows that $2\notin N_{E_2/\mathbb Q_2}(E_2^*)$, and so it is a division algebra by \cite[Corollary 14.8]{L}.

Let $v_2$ be the $2$-adic valuation on $\mathbb Q_2$. Since $E_2/\mathbb Q_2$ is unramified, $v_2$ has a unique extension to $E_2$; denote it by $v_{E_2}$. Thus
\[
        v_{E_2}|_{\mathbb Q_2}=v_2,
        \quad
        v_{E_2}(2)=1,
        \quad\text{and}\quad
        v_{E_2}(E_2^*)=\mathbb Z.
\]
The valuation on $D$ is
\[
        v\left(c_0+c_1x+c_2x^2\right)
        =
        \min\left\{
        v_{E_2}(c_0),\,
        v_{E_2}(c_1)+\frac13,\,
        v_{E_2}(c_2)+\frac23
        \right\}.
\]
Thus $v(x)=1/3$, $\Gamma_D=(1/3)\mathbb Z$, and $\Gamma_{\mathbb Q_2}=\mathbb Z$.  Also $\overline D=\overline{E_2}\cong\mathbb F_8$.  Therefore
\[
        [\overline D:\mathbb F_2]\,|\Gamma_D:\Gamma_{\mathbb Q_2}|
        =
        9
        =
        [D:\mathbb Q_2].
\]
Since the residue characteristic is $2$ and $2\nmid \deg D=3$, the division algebra $D$ is tame over $\mathbb Q_2$.

The associated graded algebra is noncommutative.  Let $\widetilde u$ be the residue of $u$ in $\overline{E_2}\cong\mathbb F_8$, and let $\widetilde x$ be the image of $x$ in $\mathbf{gr}(D)$.  From $xu=\sigma(u)x=(u^2-2)x$ we get
\[
        \widetilde x\,\widetilde u
        =
        \widetilde u^{\,2}\widetilde x
\]
in $\mathbf{gr}(D)$.  Since $\widetilde u\notin\mathbb F_2$, we have $\widetilde u^{\,2}\ne\widetilde u$.  Hence $\widetilde x\,\widetilde u\ne\widetilde u\,\widetilde x$, and so $\mathbf{gr}(D)$ is noncommutative.  By Theorem~\ref{thm:main-tame}, $D^*$ contains no abelian maximal subgroup.
\end{example}

\section{Beyond finite-dimensional tame algebras}
\label{sec:beyond-tame}

The preceding section treated finite-dimensional tame division algebras over Henselian centers.  The same argument applies to many valued division rings that need not be finite-dimensional over their centers.  To do this, we first record a graded criterion that is often useful for detecting the
failure of multiplicative groups of valued division rings to have abelian
maximal subgroups.

\begin{proposition}
\label{prop:positive-positive-graded}
Let $D$ be a valued division ring, and let $E=\mathbf{gr}(D)$. Suppose that
there exist homogeneous elements $u\in E_\alpha^*$ and $w\in E_\beta^*$,
with $\alpha>0$ and $\beta>0$, such that $uw\ne wu$. Then $D^*$ contains no
abelian maximal subgroup.
\end{proposition}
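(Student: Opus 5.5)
The plan is to show that $D^*$ has property $(\mathcal P)$ with $N=1+M_D$, and then apply Lemma~\ref{lem:normal-obstruction}. We cannot use Theorem~\ref{thm:normals-nonabelian} here, since $D$ need not be finite-dimensional over its center. So both halves of property $(\mathcal P)$ will be read off directly from the two given homogeneous elements $u,w$.

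\emph{Quotient side.} The map $\rho:D^*\to\mathbf{gr}(D)^*$ is an epimorphism with kernel $1+M_D$, so $D^*/(1+M_D)\cong\mathbf{gr}(D)^*$. The nonzero homogeneous elements of $E$ are exactly its units, so $u,w\in E^*$. Since $uw\neq wu$, the group $E^*$ is nonabelian, and hence so is $D^*/(1+M_D)$.

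\emph{Normal subgroup side.} Lift $u$ and $w$: choose $a,b\in D^*$ with $\widetilde a=u$ and $\widetilde b=w$. Then $v(a)=\alpha>0$ and $v(b)=\beta>0$, so $a,b\in M_D$, and therefore $1+a,\,1+b\in 1+M_D$. Expanding the commutator gives
\[
(1+a)(1+b)-(1+b)(1+a)=ab-ba .
\]
It remains to show $ab\neq ba$. Suppose instead that $ab=ba$. Then $\widetilde{ab}=\widetilde{ba}$, that is, $uw=\widetilde a\,\widetilde b=\widetilde b\,\widetilde a=wu$, contradicting the hypothesis. Hence $(1+a)(1+b)\neq(1+b)(1+a)$, and $1+M_D$ is nonabelian.

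Combining the two sides, $D^*$ has property $(\mathcal P)$ with $N=1+M_D$, and Lemma~\ref{lem:normal-obstruction} shows that $D^*$ has no abelian maximal subgroup.

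The one point needing care is the lifting step: we need elements of strictly positive value whose graded images fail to commute. This is exactly where positivity of $\alpha$ and $\beta$ is used, since it guarantees $a,b\in M_D$. The noncommutativity then transfers back from $E$ to $D$ by the trivial argument above, so no finite-dimensionality or cofinality argument (such as Lemma~\ref{lem:finite-index-cofinal}) is needed.
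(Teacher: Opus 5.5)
Your proposal is correct and follows the paper's proof essentially step for step: you lift $u,w$ to elements of $M_D$, pull the noncommutativity back through $\widetilde{ab}=\widetilde a\,\widetilde b$, and use $D^*/(1+M_D)\cong E^*$ for the quotient side. Your closing appeal to Lemma~\ref{lem:normal-obstruction} is the more accurate citation. The paper instead ends by invoking Corollary~\ref{cor:normal-obstruction}, whose hypotheses (finite-dimensionality over the center and a nontrivial valuation on the center) are not assumed here, whereas property $(\mathcal P)$ together with the Lemma is all that is needed.
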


\begin{proof}
Choose $a,b\in D^*$ with $v(a)=\alpha$, $v(b)=\beta$,
$a+D^{>\alpha}=u$, and $b+D^{>\beta}=w$. Then $a,b\in M_D$, so
$1+a,1+b\in 1+M_D$. If $ab=ba$, then their images in
$E_{\alpha+\beta}$ would be equal, contrary to $uw\ne wu$. Hence
$ab\ne ba$, and therefore $1+a$ and $1+b$ do not commute. Thus $1+M_D$ is
nonabelian.

Also $E^*$ is nonabelian because $u$ and $w$ do not commute. Since
$D^*/(1+M_D)\cong E^*$, both $1+M_D$ and the quotient are nonabelian.
Thus $D^*$ has property $(\mathcal P)$, and the result follows from
Corollary \ref{cor:normal-obstruction}.
\end{proof}

\begin{corollary}
\label{cor:canonical-action}
Let \(D\) be a valued division ring with center \(K\). Suppose that the
canonical action
\[
        \theta_D:\Gamma_D/\Gamma_K
        \longrightarrow
        \Aut_{\overline K}(Z(\overline D))
\]
is nontrivial. Then \(D^*\) contains no abelian maximal subgroup.
\end{corollary}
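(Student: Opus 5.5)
The plan is to reduce to Proposition~\ref{prop:positive-positive-graded}: we will produce two noncommuting homogeneous elements of $E=\mathbf{gr}(D)$, both of strictly positive degree. First recall how $\theta_D$ is defined. For $\gamma\in\Gamma_D$ choose $a\in D^*$ with $v(a)=\gamma$. Conjugation $c\mapsto aca^{-1}$ preserves $V_D$ and $M_D$, so it induces an automorphism of $\overline D$. This automorphism restricts to an automorphism of $Z(\overline D)$ that fixes $\overline K$. Changing $a$ by an element of $V_D^*$ changes the induced map only by an inner automorphism of $\overline D$, which is trivial on $Z(\overline D)$. Changing $a$ by an element of $K^*$ changes nothing. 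Hence $\theta_D(\gamma+\Gamma_K)$ is well defined. In graded terms, with $\widetilde a\in E_\gamma^*$ and $\overline c\in Z(\overline D)=Z(E_0)$ for $c\in V_D^*$, we have
\[
  \widetilde a\,\widetilde c\,\widetilde a^{-1}=\theta_D(\gamma)(\overline c),
\]
where we identify $E_0=\overline D$.

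Now take $\gamma$ with $\theta_D(\gamma)\neq\mathrm{id}$, and $c\in V_D^*$ with $\overline c\in Z(\overline D)$ and $\theta_D(\gamma)(\overline c)\neq\overline c$. Then $\widetilde a\,\widetilde c\neq\widetilde c\,\widetilde a$, but $\widetilde c$ has degree $0$, so Proposition~\ref{prop:positive-positive-graded} does not apply directly. We fix this in two steps.

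\emph{Making the degree of $a$ positive.} Since $\gamma\notin\Gamma_K$, we have $\gamma\neq 0$. If $\gamma<0$, replace $a$ by $a^{-1}$. Its action is $\theta_D(\gamma)^{-1}$, which is still nontrivial and still moves $\overline c$. So we may take $\alpha=\gamma>0$.

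\emph{Making the degree of the second element positive.} Try $b=ac$, of degree $\alpha>0$. Then
\[
  \widetilde a\,\widetilde b=\widetilde a\,\widetilde a\,\widetilde c
  \quad\text{and}\quad
  \widetilde b\,\widetilde a=\widetilde a\,\widetilde c\,\widetilde a .
\]
These are equal if and only if $\widetilde a\,\widetilde c=\widetilde c\,\widetilde a$, which fails. So $\widetilde a$ and $\widetilde{ac}$ are noncommuting homogeneous elements of positive degrees $\alpha,\alpha$, and Proposition~\ref{prop:positive-positive-graded} gives the conclusion.

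The main obstacle is the setup: confirming that the canonical action is given by conjugation by homogeneous elements of $E$, and that a nontrivial action really supplies $\gamma\neq0$. This requires the conjugation description of $\theta_D$ and the check that it is well defined on $\Gamma_D/\Gamma_K$, both of which should be standard (as in \cite{TW}). No finite-dimensionality or Henselian hypothesis is needed, because Proposition~\ref{prop:positive-positive-graded} handles both $1+M_D$ and the quotient $E^*$ directly.
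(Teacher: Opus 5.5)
Your proof is correct and follows the paper's argument. Both arrange $\gamma>0$ by inverting if necessary, lift a central residue moved by $\theta_D(\gamma+\Gamma_K)$ to a unit $c\in V_D^*$, and apply Proposition~\ref{prop:positive-positive-graded} to two noncommuting elements of $\mathbf{gr}(D)_\gamma^*$. The differences are cosmetic: you pair $\widetilde a$ with $\widetilde a\,\widetilde c$ where the paper uses $\widetilde c\,\widetilde a$ (in your notation), and you make explicit why $\theta_D$ is well defined and why $\gamma\neq 0$, which the paper leaves implicit.
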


\begin{proof}
Since the action is nontrivial, there exist
$\gamma+\Gamma_K\in\Gamma_D/\Gamma_K$ and $z\in Z(\overline D)^*$ such that
$\theta_D(\gamma+\Gamma_K)(z)\ne z$. Replacing $\gamma$ by $-\gamma$, if
necessary, we may assume $\gamma>0$. Choose $d\in D^*$ with $v(d)=\gamma$,
and choose $a\in V_D^*$ lifting $z$. Then in $\mathbf{gr}(D)$,
\[
        \widetilde d\,\widetilde a\,\widetilde d^{-1}
        =
        \theta_D(\gamma+\Gamma_K)(z)
        \ne z.
\]
Thus $\widetilde d\,\widetilde a\ne \widetilde a\,\widetilde d$. Put
$u=\widetilde d$ and $w=\widetilde a\,\widetilde d$. Then
$u,w\in \mathbf{gr}(D)_\gamma^*$, and $uw\ne wu$. The result follows from
Proposition~\ref{prop:positive-positive-graded}.
\end{proof}

We now apply Proposition~\ref{prop:positive-positive-graded} to three
standard families.   For the relevant constructions, see Tignol--Wadsworth~\cite[Sections~1.1.3--1.1.4]{TW} and Lam~\cite[Section~14]{L}. The first one is the twisted Laurent series given in \cite[§1.1.2, pp.~3--4]{TW}:

\begin{proposition}
\label{prop:twisted-laurent}
Let $A$ be a division ring, let $\sigma\in\operatorname{Aut}(A)$, and let
$D=A((x;\sigma))$ be the twisted Laurent series, with multiplication determined by $xa=\sigma(a)x$ for
$a\in A$. Suppose that either $A^*$ is nonabelian or
$\sigma\ne\operatorname{id}_A$. Then $D^*$ contains no abelian maximal
subgroup.
\end{proposition}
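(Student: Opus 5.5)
We will apply Proposition~\ref{prop:positive-positive-graded} to the $x$-adic valuation on $D=A((x;\sigma))$. Here $v\bigl(\sum_{i\geq m}a_ix^i\bigr)=m$ when $a_m\neq 0$, with value group $\Gamma_D=\mathbb Z$. The residue division ring is $\overline D\cong A$, via the constant term of an element of $V_D=A[[x;\sigma]]$.

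The first step is to identify the associated graded ring. The leading-term map $\sum_{i\geq m}a_ix^i\mapsto a_mx^m$ identifies $\mathbf{gr}(D)$ with the twisted Laurent polynomial ring $A[x,x^{-1};\sigma]$, graded by degree in $x$. Thus $\mathbf{gr}(D)_n=Ax^n$, with $\widetilde a\,\widetilde x=\widetilde x\,\widetilde{\sigma^{-1}(a)}$, and more usefully $\widetilde x\,\widetilde a=\widetilde{\sigma(a)}\,\widetilde x$. This is routine: it follows because the leading coefficient of a product of series is $a_m\sigma^m(b_n)$, which is nonzero since $A$ is a division ring.

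The second step is to produce two noncommuting homogeneous elements of strictly positive degree, splitting into the two cases of the hypothesis.

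\emph{Case 1: $\sigma\neq\mathrm{id}_A$.} Choose $a\in A^*$ with $\sigma(a)\neq a$, and put $u=\widetilde x\in\mathbf{gr}(D)_1^*$ and $w=\widetilde{ax}\in\mathbf{gr}(D)_1^*$. Then $uw=\sigma(a)x^2$ and $wu=ax^2$, which differ.

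\emph{Case 2: $\sigma=\mathrm{id}_A$ and $A^*$ is nonabelian.} Choose $a,b\in A^*$ with $ab\neq ba$, and put $u=\widetilde{ax}$ and $w=\widetilde{bx}$ in degree $1$. Then $uw=abx^2\neq bax^2=wu$, because $x$ is central in this case.

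(One could treat both cases uniformly by taking $u=ax$ and $w=bx$, giving $uw=a\sigma(b)x^2$ and $wu=b\sigma(a)x^2$; the case split just makes the choice explicit.)

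Finally, since $\alpha=\beta=1>0$, Proposition~\ref{prop:positive-positive-graded} yields that $D^*$ has no abelian maximal subgroup. Note that that proposition needs no finite-dimensionality or center hypothesis, which matters here since $D$ may be infinite-dimensional over its center. The only step requiring care is the first, namely confirming that $v$ is a valuation and that the graded ring has the stated twisted multiplication. This reduces to the leading-coefficient computation for products of twisted series, which I expect to be the main, though mild, obstacle.
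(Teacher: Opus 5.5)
Your proposal is correct and follows essentially the same route as the paper: the $x$-adic valuation, the identification $\mathbf{gr}(D)\cong A[\widetilde x,\widetilde x^{-1};\sigma]$, and the same two choices of noncommuting degree-$1$ elements ($u=\widetilde x$, $w=a\widetilde x$ when $\sigma\ne\operatorname{id}_A$; $u=a\widetilde x$, $w=b\widetilde x$ otherwise). Both cases then feed into Proposition~\ref{prop:positive-positive-graded}. Your uniform variant with $a\sigma(b)\ne b\sigma(a)$ is also valid; it is exactly condition (3) of Proposition~\ref{prop:malcev-neumann} specialized to this setting.
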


\begin{proof}
We use the $x$-adic valuation on $D$. Thus
$$
        V_D=A[[x;\sigma]],\quad
        M_D=xA[[x;\sigma]],\quad
        \overline D=A,\quad\text{and}\quad
        \Gamma_D=\mathbb Z.
$$
Moreover,
\[
        \mathbf{gr}(D)\cong A[\widetilde x,\widetilde x^{-1};\sigma],
\]
where $\widetilde x a=\sigma(a)\widetilde x$ for $a\in A$.
Assume first that $\sigma\ne\operatorname{id}_A$. Choose $a\in A$ with
$\sigma(a)\ne a$. Set $u=\widetilde x$ and $w=a\widetilde x$. Then
$u,w\in \mathbf{gr}(D)_1^*$, and
\[
        uw=\sigma(a)\widetilde x^2
        \quad\text{and}\quad
        wu=a\widetilde x^2.
\]
Hence $uw\ne wu$. Proposition~\ref{prop:positive-positive-graded} applies.
It remains to consider the case $\sigma=\operatorname{id}_A$ and $A^*$ is
nonabelian. Choose $a,b\in A^*$ with $ab\ne ba$. Set
$u=a\widetilde x$ and $w=b\widetilde x$. Then
$u,w\in \mathbf{gr}(D)_1^*$, and
\[
        uw=ab\widetilde x^2
        \quad\text{and}\quad
        wu=ba\widetilde x^2.
\]
Thus $uw\ne wu$. Again Proposition~\ref{prop:positive-positive-graded}
applies. Therefore $D^*$ contains no abelian maximal subgroup.
\end{proof}

For iterated Laurent series and their natural valuations, we follow
Tignol--Wadsworth~\cite[§1.1.3, pp.~5--7]{TW}.

\begin{proposition}
\label{prop:iterated-laurent}
Let $M$ be a field, let $\sigma=(\sigma_i)^n_{i=1}$ be a family of pairwise
commuting automorphisms of $M$, and let $u=(u_{i,j})^n_{i,j=1}$ be a family
of elements of $M^*$ satisfying
$$
        u_{i,i}=1,\quad u_{i,j}u_{j,i}=1,\quad\text{and}\quad
        u_{i,j}u_{j,k}u_{k,i}
        =
        \sigma_k(u_{i,j})\sigma_i(u_{j,k})\sigma_j(u_{k,i})
        \quad\text{for all } i,j,k.
$$
Let
\[
        D=L((M;\sigma,u))
        =
        M((x_1;\sigma_1))((x_2;\widehat\sigma_2))\cdots
        ((x_n;\widehat\sigma_n))
\]
be the iterated Laurent series division ring.  Thus
$$
        x_i m=\sigma_i(m)x_i\quad\text{and}\quad x_i x_j=u_{i,j}x_jx_i
        \quad\text{for } m\in M \text{ and } 1\leq i,j\leq n.
$$  Suppose that at least one of the
following conditions holds:
\begin{enumerate}[label=\textup{(\arabic*)}]
        \item $\sigma_i\ne\operatorname{id}_M$ for some $i$;
        \item $u_{i,j}\ne 1$ for some $i\ne j$.
\end{enumerate}
Then $D^*$ contains no abelian maximal subgroup.
\end{proposition}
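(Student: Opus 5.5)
We will apply Proposition~\ref{prop:positive-positive-graded}, exactly as in the proof of Proposition~\ref{prop:twisted-laurent}. We use the standard valuation on $D=L((M;\sigma,u))$ described in Tignol--Wadsworth~\cite[§1.1.3]{TW}. Its value group is $\Gamma_D=\mathbb Z^n$ with the right-to-left lexicographic order, and $v(x_i)=\varepsilon_i$, the $i$-th standard basis vector. The residue is $\overline D=M$, and
\[
        \mathbf{gr}(D)\cong M[\widetilde x_1^{\pm1},\dots,\widetilde x_n^{\pm1};\sigma,u],
\]
a twisted graded Laurent polynomial ring. In it, $\widetilde x_i m=\sigma_i(m)\widetilde x_i$ for $m\in M$ and $\widetilde x_i\widetilde x_j=u_{i,j}\widetilde x_j\widetilde x_i$, because these relations hold exactly in $D$ and every element involved is homogeneous. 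Under the lexicographic order each $\varepsilon_i$ is positive, since its last nonzero coordinate is $1$. So each $\widetilde x_i$ and each $m\widetilde x_i$ with $m\in M^*$ is a homogeneous unit of positive degree $\varepsilon_i$.

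In case (1), choose $i$ with $\sigma_i\ne\mathrm{id}_M$ and $m\in M^*$ with $\sigma_i(m)\ne m$. Put $u=\widetilde x_i$ and $w=m\widetilde x_i$; both lie in $\mathbf{gr}(D)_{\varepsilon_i}^*$. Then
\[
uw=\sigma_i(m)\widetilde x_i^2 \quad\text{and}\quad wu=m\widetilde x_i^2.
\]
These differ because $\widetilde x_i^2$ is a unit and the coefficients differ. In case (2), choose $i\ne j$ with $u_{i,j}\ne1$ and put $u=\widetilde x_i$, $w=\widetilde x_j$, of positive degrees $\varepsilon_i$ and $\varepsilon_j$. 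Then $uw=u_{i,j}wu\ne wu$, since $wu$ is a unit. In either case Proposition~\ref{prop:positive-positive-graded} yields the conclusion.

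The main point to check is the identification of $\mathbf{gr}(D)$, or at least the two facts actually used. The first is that $v(x_i)=\varepsilon_i$ is positive, which depends on the convention for the iterated order; any ordering in which each single $x_i$ has positive value suffices. If the convention made some $\varepsilon_i$ negative, we would replace $x_i$ by $x_i^{-1}$, which simply replaces $\sigma_i$ by $\sigma_i^{-1}$ and $u_{i,j}$ by its inverse in the relations, and the same argument applies. The second is that distinct elements of $M^*$ times the same monomial have distinct images in $\mathbf{gr}(D)$. This holds because $M^*\to\mathbf{gr}(D)_0^*$ is injective, as $\overline D=M$. Both facts are standard from the construction in \cite{TW}, so we would cite them rather than reprove them.
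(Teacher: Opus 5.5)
Your proposal is correct and follows the paper's proof essentially verbatim. It uses the same $(x_1,\ldots,x_n)$-adic valuation with $v(x_i)=\epsilon_i>0$ in the right-to-left order, the same pairs $\widetilde x_i,\ a\widetilde x_i$ in case (1) and $\widetilde x_i,\ \widetilde x_j$ in case (2), and the same appeal to Proposition~\ref{prop:positive-positive-graded}. One small correction, in a fallback remark you don't actually need: replacing $x_i$ by $x_i^{-1}$ changes the commutation factor to $\sigma_i^{-1}(u_{i,j})^{-1}$ rather than $u_{i,j}^{-1}$, but this is still $\neq 1$ exactly when $u_{i,j}\neq 1$, so nothing breaks.
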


\begin{proof}
Let $v=v_{x_1,\ldots,x_n}$ be the $(x_1,\ldots,x_n)$-adic valuation on $D$.
Its value group is $\mathbb Z^n$, ordered right-to-left, and
$v(x_i)=\epsilon_i>0$, where $\epsilon_i$ is the $i$-th standard basis vector.
For each $i$, put
$\widetilde x_i=x_i+D^{>v(x_i)}\in \mathbf{gr}(D)_{v(x_i)}$.
Assume first that $\sigma_i\ne\operatorname{id}_M$ for some $i$. Choose
$a\in M$ with $\sigma_i(a)\ne a$. Set $u=\widetilde x_i$ and
$w=a\widetilde x_i$. Then $u,w\in \mathbf{gr}(D)_{v(x_i)}^*$, and
\[
        uw=\sigma_i(a)\widetilde x_i^2
        \quad\text{and}\quad
        wu=a\widetilde x_i^2.
\]
Hence $uw\ne wu$. Proposition~\ref{prop:positive-positive-graded} applies.
Assume next that $u_{i,j}\ne 1$ for some $i\ne j$. Set
$u=\widetilde x_i$ and $w=\widetilde x_j$. Then
$u\in \mathbf{gr}(D)_{v(x_i)}^*$ and
$w\in \mathbf{gr}(D)_{v(x_j)}^*$, both degrees are positive, and $uw=u_{i,j}wu$. Since $u_{i,j}\ne 1$, we have $uw\ne wu$. Again
Proposition~\ref{prop:positive-positive-graded} applies. Therefore $D^*$
contains no abelian maximal subgroup.
\end{proof}

For Mal'cev--Neumann series and their natural valuations, we follow
Tignol--Wadsworth~\cite[§1.1.4, pp.~7--9]{TW}.

\begin{proposition}
\label{prop:malcev-neumann}
Let $A$ be a division ring, and let $\Gamma$ be a totally ordered additive
abelian group.  Let $\omega:\Gamma\to \operatorname{Aut}(A)$ and
$f:\Gamma\times\Gamma\to A^*$ satisfy
\[
        \omega_\gamma(f(\delta,\varepsilon))f(\gamma,\delta+\varepsilon)
        =
        f(\gamma,\delta)f(\gamma+\delta,\varepsilon)
        \quad\text{and}\quad
        \omega_\gamma\omega_\delta(a)
        =
        f(\gamma,\delta)\omega_{\gamma+\delta}(a)f(\gamma,\delta)^{-1},
\]
for all $\gamma,\delta,\varepsilon\in\Gamma$ and $a\in A$, and
$\omega_0=\operatorname{id}_A$, $f(0,\gamma)=f(\gamma,0)=1$.  Let
\[
        D=A((\Gamma;\omega,f))
\]
be the corresponding Mal'cev--Neumann division ring.  Thus an element of
$D$ is a formal series $\varphi=\sum_{\gamma\in\Gamma}a_\gamma z^\gamma$ with well-ordered support, and multiplication is determined by
\[
        z^\gamma a=\omega_\gamma(a)z^\gamma\quad\text{and}\quad
        z^\gamma z^\delta=f(\gamma,\delta)z^{\gamma+\delta}.
\]
Suppose that at least one of the following conditions holds:
\begin{enumerate}[label=\textup{(\arabic*)}]
        \item $\omega_\gamma(a)\ne a$ for some $\gamma>0$ and $a\in A$;
        \item $f(\gamma,\delta)\ne f(\delta,\gamma)$ for some
        $\gamma,\delta>0$;
        \item $a\omega_\gamma(b)\ne b\omega_\gamma(a)$ for some
        $\gamma>0$ and $a,b\in A$.
\end{enumerate}
Then $D^*$ contains no abelian maximal subgroup.
\end{proposition}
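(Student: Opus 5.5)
The approach is to apply Proposition~\ref{prop:positive-positive-graded} with the standard valuation $v\bigl(\sum a_\gamma z^\gamma\bigr)=\min\operatorname{supp}$ on $D=A((\Gamma;\omega,f))$. For this valuation, $\Gamma_D=\Gamma$ and $\overline D=A$. The associated graded ring is the graded crossed product
\[
\mathbf{gr}(D)\cong\bigoplus_{\gamma\in\Gamma}A\,\widetilde z^{\gamma},
\]
in which $\widetilde z^\gamma a=\omega_\gamma(a)\widetilde z^\gamma$ and $\widetilde z^\gamma\widetilde z^\delta=f(\gamma,\delta)\widetilde z^{\gamma+\delta}$. In particular, each $\mathbf{gr}(D)_\gamma^*$ equals $A^*\widetilde z^\gamma$, with $\widetilde z^\gamma$ the image of $z^\gamma$. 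I will take these facts from \cite[§1.1.4]{TW}. In each case below the task is then to exhibit two noncommuting homogeneous units of positive degree.

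Case (1): there are $\gamma>0$ and $a\in A$ with $\omega_\gamma(a)\ne a$; necessarily $a\ne0$. I will set $u=\widetilde z^\gamma$ and $w=a\widetilde z^\gamma$, both in $\mathbf{gr}(D)_\gamma^*$. Then
\[
uw=\omega_\gamma(a)f(\gamma,\gamma)\widetilde z^{2\gamma}
\quad\text{and}\quad
wu=af(\gamma,\gamma)\widetilde z^{2\gamma}.
\]
Since $f(\gamma,\gamma)\in A^*$, these coefficients differ, so $uw\ne wu$.

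Case (2): there are $\gamma,\delta>0$ with $f(\gamma,\delta)\ne f(\delta,\gamma)$. I will take $u=\widetilde z^\gamma$ and $w=\widetilde z^\delta$. Because $\Gamma$ is abelian, $uw=f(\gamma,\delta)\widetilde z^{\gamma+\delta}$ and $wu=f(\delta,\gamma)\widetilde z^{\gamma+\delta}$ lie in the same component, and they differ.

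Case (3): there are $\gamma>0$ and $a,b\in A$ with $a\omega_\gamma(b)\ne b\omega_\gamma(a)$; in particular $a,b$ are not both zero. I will first try $u=a\widetilde z^\gamma$ and $w=b\widetilde z^\gamma$, for which
\[
uw=a\omega_\gamma(b)f(\gamma,\gamma)\widetilde z^{2\gamma}
\quad\text{and}\quad
wu=b\omega_\gamma(a)f(\gamma,\gamma)\widetilde z^{2\gamma},
\]
and these differ. This choice needs $a,b\ne0$ so that $u,w$ are units. If, say, $b=0$, the hypothesis gives $a\omega_\gamma(0)=0=b\omega_\gamma(a)$, which contradicts the hypothesis. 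Hence $a$ and $b$ are automatically nonzero.

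In all three cases, Proposition~\ref{prop:positive-positive-graded} yields that $D^*$ has no abelian maximal subgroup.

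The main obstacle is justifying the description of $\mathbf{gr}(D)$ and the fact that positive-degree homogeneous units exist in the required forms. This rests on the multiplicativity of the leading-term map for well-ordered supports, which I will cite from \cite{TW} rather than reprove. The remaining steps are only bookkeeping: checking that the nonzero factor $f(\gamma,\gamma)$ on the right does not destroy the inequalities, and confirming that all elements used are nonzero.
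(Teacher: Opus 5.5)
Your proposal is correct and follows the paper's proof essentially verbatim: it uses the same Mal'cev--Neumann valuation, the same pairs $u,w$ in each of the three cases, and the same appeal to Proposition~\ref{prop:positive-positive-graded}. Your extra check that $a\ne 0$ in case (1) and $a,b\ne 0$ in case (3), so that $u$ and $w$ really are homogeneous units, fills in a detail the paper leaves implicit.
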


\begin{proof}
Let $v$ be the Mal'cev--Neumann valuation on $D$:
\[
        v(\varphi)=\min(\operatorname{supp}\varphi)\quad\text{and}\quad v(0)=\infty.
\]
Then $\Gamma_D=\Gamma$, $\overline D=A$, and $z^\gamma\in M_D$ whenever
$\gamma>0$. For $\gamma>0$, write
$\widetilde z^\gamma=z^\gamma+D^{>\gamma}\in \mathbf{gr}(D)_\gamma$.

Assume first that $\omega_\gamma(a)\ne a$ for some $\gamma>0$ and $a\in A$.
Set $u=\widetilde z^\gamma$ and $w=a\widetilde z^\gamma$. Then
$u,w\in \mathbf{gr}(D)_\gamma^*$, and
\[
        uw=\omega_\gamma(a)f(\gamma,\gamma)\widetilde z^{2\gamma}
        \quad\text{and}\quad
        wu=af(\gamma,\gamma)\widetilde z^{2\gamma}.
\]
Hence $uw\ne wu$. Proposition~\ref{prop:positive-positive-graded} applies.
Assume next that $f(\gamma,\delta)\ne f(\delta,\gamma)$ for some
$\gamma,\delta>0$. Set $u=\widetilde z^\gamma$ and
$w=\widetilde z^\delta$. Then $u\in \mathbf{gr}(D)_\gamma^*$ and
$w\in \mathbf{gr}(D)_\delta^*$, and
\[
        uw=f(\gamma,\delta)\widetilde z^{\gamma+\delta}
        \quad\text{and}\quad
        wu=f(\delta,\gamma)\widetilde z^{\gamma+\delta}.
\]
Thus $uw\ne wu$. Again Proposition~\ref{prop:positive-positive-graded}
applies.
Finally, assume that $a\omega_\gamma(b)\ne b\omega_\gamma(a)$ for some
$\gamma>0$ and $a,b\in A$. Set $u=a\widetilde z^\gamma$ and
$w=b\widetilde z^\gamma$. Then $u,w\in \mathbf{gr}(D)_\gamma^*$, and
\[
        uw=a\omega_\gamma(b)f(\gamma,\gamma)\widetilde z^{2\gamma}
        \quad\text{and}\quad
        wu=b\omega_\gamma(a)f(\gamma,\gamma)\widetilde z^{2\gamma}.
\]
Hence $uw\ne wu$. Proposition~\ref{prop:positive-positive-graded} applies.
Therefore $D^*$ contains no abelian maximal subgroup.
\end{proof}

\section{A contrasting example and the malnormality obstruction}
\label{sec:hamilton}
\label{sec:malnormal}

The valuative obstruction above works by producing a nonabelian normal
subgroup and a nonabelian quotient.  This mechanism is not automatic for
multiplicative groups of noncommutative division rings.  Hamilton's quaternion
division ring is the simplest contrast.  It is a cyclic division algebra of
degree $2$ over $\mathbb R$, but its multiplicative group does not have
property $(\mathcal P)$.  Nevertheless, $\mathbb H^*$ has no abelian maximal
subgroup.  This leads to the malnormality obstruction, based on the position of a
maximal subfield inside $D^*$.

\begin{example}
\label{ex:Hamilton-no-P}
Let $\mathbb H=\left(\frac{-1,-1}{\mathbb R}\right)$ be the division ring of real quaternions. Then $\mathbb H^*$
does not have property $(\mathcal P)$.
\end{example}

\begin{proof}
Every nonzero quaternion $q=a+bi+cj+dk\in\mathbb H^*$, with
$a,b,c,d\in\mathbb R$, has a unique polar decomposition $q=ru$, where
$$
        r=|q|=\sqrt{q\overline q}
        =\sqrt{a^2+b^2+c^2+d^2}\in\mathbb R_{>0}
$$
and $u=q/|q|$. Here $\overline q=a-bi-cj-dk$ is the standard quaternion
conjugate of $q$, and $\mathbb R_{>0}$ denotes the multiplicative group of
positive real numbers. Thus $u$ lies in
$$
        S^3=\{v\in\mathbb H\mid |v|=1\},
$$
the group of unit quaternions. It follows that
$$
        \mathbb H^*\cong \mathbb R_{>0}\times S^3.
$$
The first factor $\mathbb R_{>0}$ is central, while second factor $S^3$ is isomorphic to $\operatorname{SU}(2)$, the group of
$2\times 2$ unitary complex matrices of determinant $1$. Since
$\operatorname{SU}(2)$ is perfect, so is $S^3$, and therefore
$$
        [\mathbb H^*,\mathbb H^*]=S^3.
$$

Let $N\trianglelefteq\mathbb H^*$. If $N\subseteq\mathbb R^*$, then $N$ is
abelian. Suppose that $N\not\subseteq\mathbb R^*$. It is clear that
$$
        \mathbb H^*/\mathbb R^*
        \cong S^3/\{\pm1\}
        \cong \operatorname{SO}(3),
$$
where $\operatorname{SO}(3)$ is the group of rotations of $\mathbb R^3$.
The image of $N$ in this quotient is a nontrivial normal subgroup. Since
$\operatorname{SO}(3)$ is simple, this image is all of $\operatorname{SO}(3)$.
Thus
$$
        \mathbb H^*=N\mathbb R^*.
$$
As $\mathbb R^*$ is central, it follows that
$$
        [\mathbb H^*,\mathbb H^*]
        =[N\mathbb R^*,N\mathbb R^*]\subseteq N.
$$
Consequently $S^3\subseteq N$, and so $\mathbb H^*/N$ is abelian.

Therefore every normal subgroup $N$ of $\mathbb H^*$ has the following
property: either $N$ is abelian, or $\mathbb H^*/N$ is abelian. Hence
$\mathbb H^*$ does not have property $(\mathcal P)$.
\end{proof}

Recall that a subgroup $H$ of a group $G$ is malnormal if
$H\cap gHg^{-1}=1$ for every $g\in G\setminus H$.

\begin{proposition}
\label{prop:malnormal-obstruction}
Let $D$ be a noncommutative division ring finite-dimensional over its center
$K$. Suppose that $A$ is an abelian maximal subgroup of $D^*$, and set
$L=A\cup\{0\}$. Then $L$ is a maximal subfield of $D$, and $L^*/K^*$ is
malnormal in $D^*/K^*$. Equivalently,
$$
L^*\cap xL^*x^{-1}=K^*\quad\text{for every }x\in D^*\setminus L^*.
$$
In particular, $N_{D^*}(L^*)=L^*$.
\end{proposition}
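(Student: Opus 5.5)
The plan proceeds in three steps: show that $A$ contains $K^*$ and that $L$ is a maximal subfield, then derive malnormality from maximality of $A$ via a centralizer argument, and finally read off the normalizer.

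\emph{Step 1: $K^*\subseteq A$ and $L$ is a maximal subfield.} Since $K^*$ is central, $AK^*$ is an abelian subgroup containing $A$. If $AK^*=D^*$, then $D^*$ would be abelian, contradicting noncommutativity. So maximality forces $K^*\subseteq A$. Next let $L'$ be the $K$-subalgebra generated by $A$. It is a commutative domain, and it is finite-dimensional over $K$, so it is a subfield of $D$. Then $A\subseteq L'^*$, and $L'^*$ is abelian and proper in $D^*$, so $A=L'^*$ and $L=A\cup\{0\}=L'$ is a field. If $L\subseteq L_1$ with $L_1$ a subfield, the same maximality argument gives $L_1^*=A$. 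Hence $L$ is a maximal subfield, and by the double centralizer theorem $C_D(L)=L$.

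\emph{Step 2: malnormality.} Fix $x\in D^*\setminus L^*$ and put $F=L\cap xLx^{-1}$. This is a subfield containing $K$, and $F^*=L^*\cap xL^*x^{-1}$. Consider the centralizer $C=C_D(F)$. It is a division subring containing both $L$ and $xLx^{-1}$. The key idea is that $C^*$ contains $L^*=A$, so maximality gives $C^*=A$ or $C^*=D^*$. If $C^*=A$, then $xLx^{-1}\subseteq C=L$, and comparing dimensions over $K$ gives $xLx^{-1}=L$. It then remains to treat the case $x\in N_{D^*}(L^*)\setminus L^*$. Here $\langle A,x\rangle$ is strictly larger than $A$, hence equals $D^*$. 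But $\langle A,x\rangle$ lies in $N_{D^*}(L^*)$, so $L^*$ is normal in $D^*$. By the Cartan--Brauer--Hua theorem, the division subring $L$ is then either central or all of $D$. Either alternative contradicts $D$ being noncommutative with $L$ a maximal subfield of degree $>1$; the case $L=K$ would force $D=K$. So this case is impossible. If instead $C^*=D^*$, then $F\subseteq Z(D)=K$, so $F^*=K^*$, as required.

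\emph{Step 3: the reformulation and the normalizer.} The equivalence with malnormality of $L^*/K^*$ in $D^*/K^*$ is a direct translation, using $K^*\subseteq L^*$. The equality $N_{D^*}(L^*)=L^*$ was shown in Step 2. The main obstacle is the normalizer case inside Step 2, which needs Cartan--Brauer--Hua, or alternatively a Skolem--Noether argument. The centralizer trick handling the general intersection is the other key idea.
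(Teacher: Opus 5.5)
Your proof is correct and follows essentially the same route as the paper: $K^*\subseteq A$ by maximality, $L$ is a maximal subfield, and malnormality comes from a centralizer argument, with Cartan--Brauer--Hua ruling out the normalizer case. The differences are cosmetic. You get that $L$ is a field from the finite-dimensional $K$-subalgebra generated by $A$ rather than from $C_{D^*}(A)=A$, and you centralize the whole intersection $L\cap xLx^{-1}$ instead of a single noncentral element $a$. Also, $C_D(L)=L$ follows directly from maximality of $L$ as a subfield, so the double centralizer theorem is not needed there.
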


\begin{proof}
First, we claim that $K^*\subseteq A$. Indeed, as $K^*$ is central, we get that $AK^*$ is an abelian subgroup of $D^*$
containing $A$. By the maximality of $A$, either $AK^*=A$ or $AK^*=D^*$. The second
case would make $D^*$ abelian, a contradiction. Thus $K^*\subseteq A$.

Consider $C_{D^*}(A)$. Since $A$ is abelian, $A\subseteq C_{D^*}(A)$; and so by
maximality, either $C_{D^*}(A)=A$ or $C_{D^*}(A)=D^*$. The second case
would give $A\subseteq K^*$, and hence $A=K^*$. But if $d\in D^*\setminus K^*$,
then the subgroup $\langle K^*, d\rangle$ generated by $K^*$ and $d$ is abelian and properly contains
$K^*$. It follows that $\langle K^*, d\rangle=D^*$, which yeilds $D^*$
to be abelian, a contradiction. Hence $C_{D^*}(A)=A$. It follows that $C_D(A)=A\cup\{0\}=L$. Thus $L$ is a division subring of
$D$. Since $A$ is abelian, $L$ is a field. Moreover, $C_D(L)=C_D(A)=L$, from which it follows that $L$ is a maximal subfield of $D$.

It remains to prove malnormality. Let $x\in D^*\setminus L^*$, and suppose
that $L^*\cap xL^*x^{-1}$ contains an element $a\notin K^*$. Then
$a\in L\setminus K$, and both $L$ and $xLx^{-1}$ are contained in $C_D(a)$.
Since $a\notin K$, the division ring $C_D(a)$ is a proper division subring of
$D$. Therefore $C_D(a)^*$ is a proper subgroup of $D^*$ containing $A=L^*$.
By maximality of $A$, we get $C_D(a)^*=L^*$, so $C_D(a)=L$. Thus
$xLx^{-1}\subseteq L$, and equality follows by comparing dimensions over $K$.

Hence $x$ normalizes $L^*$. Since $x\notin L^*$, the normalizer
$N_{D^*}(L^*)$ properly contains $A$. By maximality of $A$, we get
$N_{D^*}(L^*)=D^*$. The Cartan--Brauer--Hua theorem now implies that
$L\subseteq K$ or $L=D$; see \cite[(13.7)]{L}. Both alternatives are impossible: $L$ properly
contains $K$, while $D$ is noncommutative. Therefore
$L^*\cap xL^*x^{-1}=K^*$ for every $x\in D^*\setminus L^*$.

The final assertion follows at once. If $x\in N_{D^*}(L^*)$, then
$L^*\cap xL^*x^{-1}=L^*$, so malnormality forces $x\in L^*$.
\end{proof}

Recall that a field $F$ is real-closed if it is formally real and has no
formally real proper algebraic extension. Equivalently,
$\sqrt{-1}\notin F$ and $F(\sqrt{-1})$ is algebraically closed; see \cite[p.~209]{L}. In this case
$F=F^2\cup(-F^2)$. For a real-closed field $F$, Frobenius' theorem (\cite[Theorem 15.10]{L}) identifies the unique noncommutative finite-dimensional division algebra over $F$ with the quaternion division algebra
$$
        Q=(-1,-1)_F
        =F+Fi+Fj+Fk,
        \quad\text{where}\quad i^2=j^2=-1\quad\text{and}\quad ij=-ji=k.
$$
This is also the class appearing in Gerstenhaber--Yang theorem (see \cite[Theorem 15.10]{L}): if a
noncommutative division ring contains a real-closed field $R$ and is
finite-dimensional as a right $R$-space, then its center is real-closed and
the division ring is quaternion over its center.

\begin{proposition}
\label{prop:real-closed-quaternion}
Let $F$ be a real-closed field, and let $Q$ be the quaternion division algebra
over $F$. Then $Q^*$ contains no abelian maximal subgroup.
\end{proposition}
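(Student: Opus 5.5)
Suppose, to the contrary, that $A$ is an abelian maximal subgroup of $Q^*$. By Proposition~\ref{prop:malnormal-obstruction}, $L=A\cup\{0\}$ is a maximal subfield of $Q$, so $[L:F]=2$. Moreover, $L^*\cap xL^*x^{-1}=F^*$ for every $x\in Q^*\setminus L^*$, and $N_{Q^*}(L^*)=L^*$. The plan is to contradict the normalizer condition by exhibiting an element $x\notin L$ with $xLx^{-1}=L$.

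First identify $L$. Since $F$ is real-closed, the only quadratic extension of $F$ is $F(\sqrt{-1})$, so $L\cong F(\sqrt{-1})$. Choose $\ell\in L$ with $\ell^2=-1$; explicitly, write $L=F(q)$ with $q$ pure. Then $q^2=-n(q)$ with $n(q)>0$, and $n(q)$ is a square $c^2$ in $F$, so $\ell=q/c$ works.

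Next produce the normalizing element. The map $y\mapsto \ell y\ell^{-1}$ is an $F$-linear involution of $Q$ fixing $L$ pointwise. Hence $Q=L\oplus L^{-}$, where $L^{-}=\{y: \ell y=-y\ell\}$, and by dimension count $L^{-}\neq 0$. This also follows from Skolem--Noether: the nontrivial automorphism $\ell\mapsto-\ell$ of $L/F$ extends to an inner automorphism of $Q$. Pick $0\neq x\in L^{-}$. Then $x\notin L$, since $\ell x=-x\ell\neq x\ell$ while $L$ is commutative. Also $x\ell x^{-1}=-\ell\in L$, so $xLx^{-1}=L$ and $x\in N_{Q^*}(L^*)\setminus L^*$, contradicting Proposition~\ref{prop:malnormal-obstruction}. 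Equivalently, $L^*\cap xL^*x^{-1}=L^*\neq F^*$.

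The only step needing care is the existence of a nonzero anticommuting element $x$, which is the Skolem--Noether step. In the concrete model one can avoid it: conjugating $L$ to $F(i)$ (or directly), take $x=j$. In fact this argument does not use real-closedness in any essential way; it uses only that $Q$ has degree $2$. So I would present it via Skolem--Noether for general quaternion division algebras, noting that real-closedness merely makes $L$ explicit. This is consistent with $\mathbb H^*$ as in the introduction.
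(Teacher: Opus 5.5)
Your proof is correct and is essentially the paper's argument: Proposition~\ref{prop:malnormal-obstruction} forces $N_{Q^*}(L^*)=L^*$, and the nontrivial $F$-automorphism of $L\cong F(\sqrt{-1})$, realized either by Skolem--Noether or by your explicit anticommuting element $x$, gives an element of $N_{Q^*}(L^*)\setminus L^*$. One caveat on your closing remark: the argument needs degree $2$ and also $\operatorname{char}F\neq 2$, so that $L/F$ is separable and the $\pm1$-eigenspace splitting exists; in characteristic $2$ a quaternion division algebra has purely inseparable maximal subfields $L$, for which $\operatorname{Aut}_F(L)=1$ and in fact $N_{Q^*}(L^*)=L^*$.
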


\begin{proof}
Suppose that $A$ is an abelian maximal subgroup of $Q^*$, and put
$L=A\cup\{0\}$. By Proposition~\ref{prop:malnormal-obstruction}, we conclude that $L$ is a maximal subfield of $Q$, and $N_{Q^*}(L^*)=L^*$. Since $Q$ has degree $2$ over $F$, we have $[L:F]=2$. As $F$ is real-closed,
$L/F$ is the quadratic Galois extension $F(\sqrt{-1})/F$, from which it follows that  $\operatorname{Aut}_F(L)\ne 1$.

By Skolem--Noether Theorem, the nontrivial $F$-automorphism of $L$ is induced by
conjugation by some element $q\in Q^*$. Thus $qLq^{-1}=L$. Since the induced
automorphism of $L$ is nontrivial, $q\notin L^*$. Hence
$q\in N_{Q^*}(L^*)\setminus L^*$, contradicting $N_{Q^*}(L^*)=L^*$.
Therefore $Q^*$ has no abelian maximal subgroup.
\end{proof}

\begin{corollary}
\label{cor:H-no-abelian-maximal}
The division ring $\mathbb H$ of real quaternions has no abelian maximal subgroup in $\mathbb H^*$.
\end{corollary}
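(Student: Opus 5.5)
This is a direct specialization of Proposition~\ref{prop:real-closed-quaternion}. We take $F=\mathbb R$, which is real-closed: it is formally real, $\sqrt{-1}\notin\mathbb R$, and $\mathbb R(\sqrt{-1})=\mathbb C$ is algebraically closed. We then identify $\mathbb H$ with the quaternion division algebra $Q=(-1,-1)_{\mathbb R}$, which is exactly how $\mathbb H$ was defined in Example~\ref{ex:Hamilton-no-P}. The proposition then says directly that $\mathbb H^*$ contains no abelian maximal subgroup.

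For a self-contained reading, I would also sketch the mechanism concretely. Suppose $A$ is an abelian maximal subgroup of $\mathbb H^*$. Proposition~\ref{prop:malnormal-obstruction} applies, since $\mathbb H$ is noncommutative and $4$-dimensional over its center $\mathbb R$. It shows that $L=A\cup\{0\}$ is a maximal subfield and that $N_{\mathbb H^*}(L^*)=L^*$. A maximal subfield of $\mathbb H$ is $2$-dimensional over $\mathbb R$, so $L\cong\mathbb C$. Explicitly, $L=\mathbb R+\mathbb R u$ for some pure quaternion $u$ with $u^2=-1$.

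Now choose a pure unit quaternion $w$ orthogonal to $u$. Then $wu=-uw$, so conjugation by $w$ maps $L$ onto itself and acts as complex conjugation on it. Hence $w\in N_{\mathbb H^*}(L^*)\setminus L^*$, which contradicts the self-normalizing property. This concrete choice of $w$ replaces the appeal to Skolem--Noether in the general proof.

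No step is a real obstacle. The only checks needed are that $\mathbb R$ is real-closed and that $\mathbb H$ matches the algebra $Q$ in Proposition~\ref{prop:real-closed-quaternion}, and both are standard.
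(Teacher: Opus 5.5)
Your proposal is correct and matches the paper's proof, which simply notes that $\mathbb H$ is the quaternion division algebra over the real-closed field $\mathbb R$ and invokes Proposition~\ref{prop:real-closed-quaternion}. Your supplementary concrete sketch is also valid. Choosing a pure unit quaternion $w$ orthogonal to $u$, so that $wu=-uw$, just makes explicit the normalizing element that the general proof obtains from Skolem--Noether.
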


\begin{proof}
The division ring $\mathbb H$ is the quaternion division algebra over the
real-closed field $\mathbb R$. The result follows from
Proposition~\ref{prop:real-closed-quaternion}.
\end{proof}

\begin{theorem}
\label{thm:real-closed-subfield-no-abelian-maximal}
Let $D$ be a noncommutative division ring containing a real-closed field $R$
such that $\dim(D_R)<\infty$. Then $D^*$ contains no abelian maximal subgroup.
\end{theorem}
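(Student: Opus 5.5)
The plan is to reduce the statement to Proposition~\ref{prop:real-closed-quaternion} by way of the Gerstenhaber--Yang theorem recalled just before it. Let $D$ be noncommutative and contain a real-closed field $R$ with $\dim(D_R)<\infty$. By Gerstenhaber--Yang (\cite[Theorem 15.10]{L}), the center $F=Z(D)$ is real-closed and $D$ is a quaternion algebra over $F$. Since $D$ is a noncommutative division algebra, Frobenius' theorem shows that $D\cong(-1,-1)_F=Q$.

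Proposition~\ref{prop:real-closed-quaternion} then applies to $Q$ over the real-closed field $F$, so $D^*\cong Q^*$ contains no abelian maximal subgroup. This concludes the argument.

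The only step that needs care is checking that the hypotheses of Gerstenhaber--Yang, as recalled in the paper, match ours. Those hypotheses are that $D$ is noncommutative, contains a real-closed field $R$, and is finite-dimensional as a right $R$-space; all three are assumed. The conclusion of that theorem already gives that $D$ is finite-dimensional over $Z(D)$ (of dimension $4$) with real-closed center. If one wanted to avoid relying on the precise form of the citation, a fallback route is available. Finite right dimension over $R$ would force $D$ to be algebraic over $R$, hence finite-dimensional over its center. One would then show the center is real-closed using Frobenius-type arguments. I expect the main obstacle to be only this bookkeeping, since the real content is in Proposition~\ref{prop:malnormal-obstruction} and Proposition~\ref{prop:real-closed-quaternion}.
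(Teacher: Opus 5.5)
Your proposal is correct and matches the paper's proof: Gerstenhaber--Yang shows that $Z(D)$ is real-closed and that $D$ is the quaternion algebra over it, and Proposition~\ref{prop:real-closed-quaternion} then finishes the argument. Your extra appeal to Frobenius to identify $D$ with $(-1,-1)_F$ is harmless, but the fallback route is unnecessary. Its first step, deducing finite-dimensionality over the center from finite right dimension over a noncentral $R$, would itself need a real argument.
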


\begin{proof}
Let $F=Z(D)$. By the Gerstenhaber--Yang theorem, $F$ is real-closed and $D$
is the quaternion division algebra over $F$; see \cite[(15.10)]{L}.
Proposition~\ref{prop:real-closed-quaternion} applies to $D$, and therefore
$D^*$ contains no abelian maximal subgroup.
\end{proof}

{\noindent\textbf{Funding.} This work is funded by Vietnam National Foundation
for Science and Technology Development (NAFOSTED) under Grant No.
101.04-2025.41.}

\end{document}